\newtheorem{theorem}{Theorem}
\newtheorem{lemma}[theorem]{Lemma}
\newtheorem{proposition}[theorem]{Proposition}
\newtheorem{problem}{Problem}
\def\inst#1{$^{#1}$}
\date{}
\title{Faces in rectilinear drawings of complete graphs\thanks{
M.\ Balko was supported by grant no.\ 23-04949X of the Czech Science Foundation (GA\v{C}R) and by the Center for Foundations of Modern Computer Science (Charles University project UNCE 24/SCI/008).
A.\ Brötzner was supported by grant 2021-03810 (Illuminate: provably good algorithms for guarding problems) from the Swedish Research Council (Vetenskapsr\r{a}det).
J.\ Tkadlec was supported by Charles Univ. projects PRIMUS 24/SCI/012 and UNCE 24/SCI/008.
%by the Center for Foundations of Modern Computer Science (Charles University project UNCE 24/SCI/008).
%This article is part of a project that has received funding from the European Research Council (ERC) under the European Union's Horizon 2020 research and innovation programme (grant agreement No 810115).
}
}
\author{
Martin Balko\inst{1}
\and
Anna Br\"otzner\inst{2}
\and
Fabian Klute\inst{3}
\and
Josef Tkadlec\inst{4}
}
\begin{document}

\maketitle

\begin{center}
{\footnotesize
\inst{1} 
Department of Applied Mathematics, \\
Faculty of Mathematics and Physics, Charles University, Czech Republic \\
\texttt{balko@kam.mff.cuni.cz}\\
\inst{2}
Faculty of Technology and Society, \\
Malm\"o University, Sweden\\
\texttt{anna.brotzner@mau.se}\\
\inst{3}
Departament de Matemàtiques\\
Universitat Politècnica de Catalunya, Barcelona, Spain\\
\texttt{fabian.klute@upc.edu}\\
\inst{4}
Computer Science Institute, \\
Faculty of Mathematics and Physics, Charles University, Czech Republic \\
\texttt{josef.tkadlec@iuuk.mff.cuni.cz}
}\\
\end{center}

\begin{abstract}
We initiate the study of extremal problems about faces in \emph{convex rectilinear drawings} of~$K_n$, that is, drawings where vertices are represented by points in the plane in convex position and edges by line segments between the points representing the end-vertices.
We show that if a convex rectilinear drawing of $K_n$ does not contain a common interior point of at least three edges, then there is always a face forming a convex 5-gon while there are such drawings without any face forming a convex $k$-gon with $k \geq 6$.

A convex rectilinear drawing of $K_n$ is \emph{regular} if its vertices correspond to vertices of a regular convex $n$-gon.
We characterize positive integers $n$ for which regular drawings of $K_n$ contain a face forming a convex 5-gon.

To our knowledge, this type of problems has not been considered in the literature before and so we also pose several new natural open problems.
\end{abstract}

\section{Introduction}

Let $G$ be a graph with no loops nor multiple edges.
In a \emph{rectilinear drawing of $G$} the vertices are represented by distinct points in the plane and each edge corresponds to a line segment connecting the images of its end-vertices.
We consider only drawings where no three points representing vertices lie on a common line.
As usual, we identify the vertices and their images, as well as the edges and the line segments representing them.

A \emph{crossing} in a rectilinear drawing $D$ of $G$ is a common interior point of at least two edges of $D$ where they properly cross.
A \emph{heavy crossing} in $D$ is a common interior point of at least three edges of $D$ where they properly cross.
We say that $D$ is \emph{generic} if there are no heavy crossings in $D$.
That is, crossings in a generic drawing $D$ are the points where exactly two edges of $D$ cross.

We focus on rectilinear drawings of complete graphs $K_n$ on $n$ vertices.
We say that a rectilinear drawing $D$ of a graph $K_n$ is \emph{convex} if the points representing the vertices of $K_n$ are in convex position.
We say that a convex drawing $D$ of $K_n$ is \emph{regular} if the points representing the vertices of $K_n$ form a regular $n$-gon; see Figure~\ref{fig-K8} for regular drawings of $K_8$ and $K_{12}$.
\begin {figure}[ht]
  \centering
  \includegraphics[width=\textwidth]{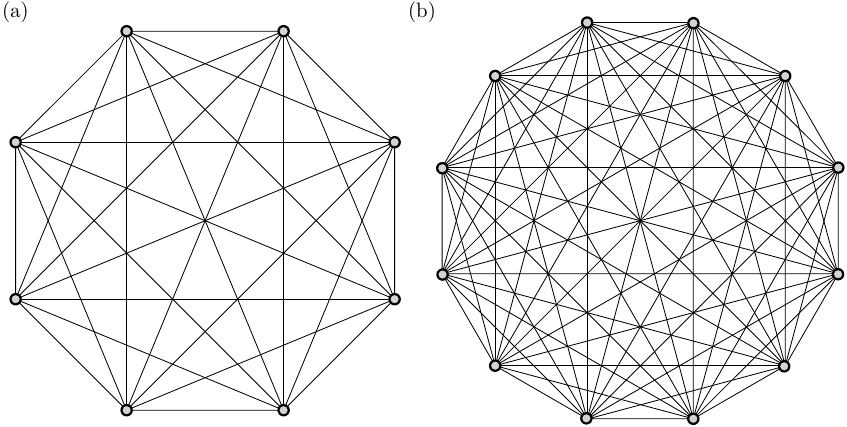}
  \caption {Regular drawings of $K_8$ (part~(a)) and $K_{12}$ (part~(b)). Observe that none of these drawings contains a 5-face.}
  \label{fig-K8}
\end {figure}

A \emph{face} in a rectilinear drawing $D$ of $K_n$ is a non-empty connected component of $\mathbb{R}^2\setminus D$.
Note that exactly one face of $D$ is unbounded and that every bounded face of $D$ is a convex polygon.
Thus, we can define the \emph{size} of a bounded face $F$ of $D$ to be the number of vertices of the polygon that forms $F$.
If the size of $F$ equals $k$, then we call $F$ a \emph{$k$-face} of $D$.

In this paper, we study extremal problems about the bounded faces of a given size in convex drawings of $K_n$.
To our knowledge, there has been no systematic study of this topic despite the fact that it offers an abundance of natural and interesting problems.
For example, what is the largest face we can always find in a convex drawing of $K_n$ for large $n$?
What if we restrict ourselves to generic convex drawings of $K_n$?
Or to regular drawings of $K_n$?
In this paper, we address these questions and we pose several natural open problems.

\section{Previous Work}

Even though these problems are very natural and that rectilinear drawings of $K_n$ have been studied extensively, we did not find any relevant reference in the literature.
The existence of faces of a given size in regular drawings of $K_n$ was recently considered by Shannon and Sloane~\cite{oeis}, who computed the values from Table~\ref{tab-OEIS}, but we are not aware of any publication.
The total number of faces in a regular drawing of $K_n$ was considered by Harborth~\cite{harborth69} and Poonen and Rubinstein~\cite{poRu98}, but these results do not distinguish faces of different sizes and do not apply to all convex drawings of $K_n$.
Finally, Hall~\cite{hall04} studied large faces in convex drawings of $K_n$ where the vertices are points from the integer lattice.

Concerning other graph classes, Griffiths~\cite{griff10} calculated the number of regions enclosed by the edges of so-called regular drawings of the complete bipartite graphs $K_{n,n}$.
There are also various results about the complexity of faces in the more general setting of line arrangements; for example~\cite{ahkm95,aegs92,bcv18,furPal84,matVal97}.
However, we do not know any result that would imply the existence of large bounded faces in all convex drawings of sufficiently large $K_n$.

Closely related to our paper is the work of Poonen and Rubinstein~\cite{poRu98} who gave a formula for the number of crossings in regular drawings of~$K_n$ and used it to count the number of faces in regular drawings of~$K_n$.
In particular, it follows from their formula that all regular drawings of $K_n$ with odd $n$ have $\binom{n}{4}$ crossings and thus are generic.
They also showed that, apart from the center, no crossing is the intersection of more than 7 edges of a regular drawing of $K_n$  for any positive integer $n$.
We also note that these results are connected to the well-known \emph{Blocking conjecture}~\cite{mat09,porWood10}, which states that the minimum number of points that block visibilities between $n$ points in the plane in general position grows superlinearly in~$n$.
The work of Poonen and Rubinstein~\cite{poRu98} implies that regular drawings of $K_n$ cannot be used for placing a small number of blocking points since their number for regular drawings of $K_n$ is quadratic~\cite{mat09}.

\section{Our Results}

First, we address the question about the maximum size of a face that we can always find in convex or regular drawings of $K_n$ for large $n$.
We observe that finding faces of size 3 or 4 in convex drawings of $K_n$ is not difficult.

\begin{proposition}
\label{prop-3-4faces}
Let $n$ be a positive integer and $D$ a
convex drawing of $K_n$.
Then, $D$ contains a 3-face if and only if $n \geq 3$.
Moreover, $D$ contains a 4-face if and only if $n \geq  6$.
\end{proposition}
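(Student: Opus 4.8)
The plan is to prove each biconditional by treating its two directions separately, and the key work lies entirely in showing that a 4-face must appear once $n\ge 6$.

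For the 3-face statement the backward direction is immediate: if $n\le 2$ then $D$ is a point or a single segment and has no bounded face, while for $n=3$ the unique bounded face is the triangle $v_1v_2v_3$. For general $n\ge 3$ I would take three consecutive hull vertices $v_1,v_2,v_3$ and show that the closed triangle they span is subdivided by a fan. Its sides $v_1v_2$ and $v_2v_3$ are hull edges, hence uncrossed, so any edge meeting the interior of $\triangle v_1v_2v_3$ must cross the side $v_1v_3$; and since $v_1,v_2,v_3$ are consecutive, $v_1v_3$ separates $v_2$ from all other vertices, so the only such edges are the segments $v_2v_j$ with $j\ge 4$. These emanate from the single vertex $v_2$ and cut $\triangle v_1v_2v_3$ into $n-2$ triangles, each a face of $D$; in particular $D$ has a 3-face.

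For the 4-face statement I would first dispose of $n\le 5$ by describing the face structure of a convex drawing: for $n=4$ the two diagonals meet in a point, yielding four triangles, and for $n=5$ the five diagonals form a pentagram whose bounded faces are ten triangles and one central pentagon (one checks that in a convex pentagon no three diagonals are ever concurrent, so this holds in every convex position). Neither has a 4-gon. For the hard direction I would localize to four consecutive vertices $v_1v_2v_3v_4$ and consider only the convex quadrilateral $Q$ they span: its three hull sides are uncrossable, and any edge entering $Q$ must cross the diagonal $v_1v_4$, which separates $\{v_2,v_3\}$ from $\{v_5,\dots,v_n\}$, so such an edge has an endpoint at $v_2$ or $v_3$. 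Hence the interior of $Q$ is exactly the superposition of two pencils of segments, one through $v_2$ (namely $v_2v_4,v_2v_5,\dots,v_2v_n$) and one through $v_3$ (namely $v_3v_1,v_3v_5,\dots,v_3v_n$), and producing a 4-face of $D$ reduces to finding a quadrilateral cell of this two-pencil arrangement. Here is the role of the threshold: each pencil contributes $n-4$ segments meeting the open diagonal $v_1v_4$, so precisely when $n\ge 6$ does each pencil cross $v_1v_4$ at least twice.

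To extract the quadrilateral I would prove a small lemma: every cell of the arrangement inside $Q$ incident to the diagonal $v_1v_4$ is a triangle or a quadrilateral, since such a cell is bounded below by a sub-segment of $v_1v_4$, on its two sides by the two pencil-segments meeting $v_1v_4$ at the endpoints of that sub-segment, and capped above by the single lowest segment crossing the wedge between them. Crucially, if the two side-segments belong to the \emph{same} pencil, they meet only at the far apex $v_2$ or $v_3$ and so the cell cannot close into a triangle; it must be a quadrilateral. Thus it suffices to find two crossings on $v_1v_4$ that are consecutive and come from the same pencil.

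The main obstacle is exactly this last point. The crossings of the two pencils along $v_1v_4$ can interleave perfectly for a given quadruple—I expect this to occur precisely in the near-degenerate situations where the three relevant long diagonals are almost concurrent—in which case every diagonal-incident cell of that particular $Q$ is a triangle. I would overcome this by not fixing the quadruple: comparing the positions of the crossings near the endpoints $v_1$ and $v_4$, where the pencil through the adjacent apex dominates, one argues that a perfectly alternating pattern cannot persist for all $n$ choices of four consecutive vertices, so some quadruple produces two consecutive same-pencil crossings and hence a quadrilateral cell. The final check, that this cell is not further subdivided, is immediate from the localization, since the only edges present in the interior of $Q$ are the two pencils themselves.
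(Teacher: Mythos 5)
Your handling of 3-faces and of the small cases $n\le 5$ is correct and essentially the paper's argument. The genuine gap is exactly where you flag it, and it is not a technicality but the core of the statement: after localizing to $Q=v_1v_2v_3v_4$ you must produce, for \emph{some} quadruple of consecutive vertices, two crossings on the open diagonal $v_1v_4$ that are consecutive and come from the same pencil, and this is never proved. The obstruction is real, not a far-fetched degeneracy: perturb the regular hexagon by sliding $v_6$ slightly towards $v_1$ along the circumcircle; then the four crossings on $v_1v_4$ occur in the order $v_2v_6,\,v_3v_6,\,v_2v_5,\,v_3v_5$, which alternates perfectly between the two pencils. Your proposed rescue---that perfect alternation ``cannot persist for all $n$ choices of four consecutive vertices''---is only asserted, and I do not see how to prove it for $n>6$: distinct quadruples of consecutive vertices then use pairwise distinct long diagonals $v_iv_{i+3}$, so the crossing patterns of different quadruples are not linked in any evident way. (For $n=6$ there is a special coincidence, namely that $v_1v_2v_3v_4$ and $v_4v_5v_6v_1$ share the diagonal $v_1v_4$ with the same four crossing points regrouped into different pencils, and one checks the two groupings cannot both alternate; this is exactly what does not generalize.) A secondary flaw: your lemma that every cell incident to $v_1v_4$ is a triangle or a quadrilateral is false. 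Between consecutive crossings from \emph{different} pencils, with the $v_2$-pencil crossing above the $v_3$-pencil crossing, the two bounding segments diverge (one runs towards $v_2$, the other towards $v_3$) and the cell can be capped by a segment from each pencil crossing one another; in the perturbed hexagon above, the cell incident to the piece of $v_1v_4$ between the crossings with $v_2v_6$ and $v_3v_6$ is a pentagon bounded by $v_1v_4$, $v_2v_6$, $v_1v_3$, $v_2v_5$, $v_3v_6$. The lemma does hold in the same-pencil case, which is the only case you use, so this flaw is reparable---but the interleaving gap is not, at least not by the argument you sketch.

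The paper's proof shows how to sidestep the interleaving problem entirely: localize to \emph{five} consecutive vertices instead of four. Inside the central 5-face $F$ of the pentagram on $v_1,\dots,v_5$, the only edges of $D$ that enter are those of a \emph{single} pencil, namely $v_3v_i$ with $i\in\{6,\dots,n\}$: any edge from $v_j$ with $j\ge 6$ to $v_1$ or $v_5$ stays on the far side of the line $v_5v_1$, any edge to $v_2$ stays on the far side of the line $v_2v_5$, and any edge to $v_4$ on the far side of the line $v_1v_4$, and each of these lines supports an edge of $F$. Every chord $v_3v_i$ enters $F$ through the edge of $F$ facing $v_3$ and exits through one of the two far edges, so it always cuts off a 4-gon on at least one side; taking the two angularly extreme chords $v_3v_n$ and $v_3v_6$, the pieces $L$ and $R$ they cut off are genuine faces of $D$ (nothing else crosses them), and monotonicity of the exit point as the chord rotates shows at least one of $L$, $R$ is a 4-face. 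With one pencil there is nothing to interleave; replacing your quadrilateral $Q$ by this pentagon is the fix your approach needs.
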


To find larger faces, we restrict ourselves to generic convex drawings of $K_n$.
In this case, we can show that a 5-face always exists if we have at least five vertices.

\begin{theorem}
\label{thm-5faceGeneric}
For every positive integer $n$ and every generic convex drawing $D$ of~$K_n$, the drawing $D$ contains a 5-face if and only if $n \geq 5$.
\end{theorem}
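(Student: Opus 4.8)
The two easy directions come first. If $n\le 4$ then the whole drawing has at most $\binom{4}{2}=6$ segments, and a direct inspection shows that every bounded face is a triangle: for $n\le 2$ there is no bounded face, for $n=3$ the only bounded face is the triangle itself, and for $n=4$ the two diagonals cross once and cut the convex quadrilateral into four triangles. Hence a $5$-face forces $n\ge 5$. For the base case $n=5$ the drawing is a convex pentagon together with its five diagonals; these form a pentagram whose central region is a convex pentagon, and since there are no other segments and $D$ is generic this region is an unsubdivided face, giving the desired $5$-face.

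For $n\ge 6$ the plan is a local analysis near a fixed hull edge, say $v_1v_2$ with $v_1,\dots,v_n$ in convex position. The starting observation is that the segments crossing the short diagonal $v_1v_3$ are exactly the edges $v_2v_4,\dots,v_2v_n$ from $v_2$, and symmetrically the segments crossing $v_2v_n$ are exactly $v_1v_3,\dots,v_1v_{n-1}$ from $v_1$; moreover along $v_1v_3$ the crossing nearest $v_1$ is with $v_2v_n$, and along $v_2v_n$ the crossing nearest $v_2$ is with $v_1v_3$. Consequently the face incident to $v_1v_2$ is the triangle $\triangle v_1v_2p$ with apex $p=v_1v_3\cap v_2v_n$. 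I would then trace the faces met when leaving $p$ toward the interior—the ``second layer'' bounded by the first diagonals $v_1v_3,v_1v_4$ at $v_1$, the first diagonals $v_2v_n,v_2v_{n-1}$ at $v_2$, and the ``roof'' diagonal $v_3v_n$—and aim to show that one of them is a convex pentagon.

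The heart of the argument is a dichotomy governed by whether the diagonals $v_1v_4$ and $v_2v_{n-1}$ meet below or above the roof $v_3v_n$. If they meet above it (as for $n=5,6$), the region directly above $p$ closes into a single central pentagon bounded by $v_1v_3,v_2v_{n-1},v_3v_n,v_1v_4,v_2v_n$. If they meet below it (as already happens for $n=7$), that region is pinched into a quadrilateral, but then a pentagon appears flanking it on each side, with its relevant side moved one diagonal further out (to $v_2v_{n-2}$, and so on). I expect this to organize into a short recursion—plausibly an induction—on how deeply the pinching propagates inward.

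The main obstacle is controlling this recursion and proving it always terminates at a pentagon, i.e.\ that the target $5$-face never degrades into only triangles and quadrilaterals. This cannot be settled by counting alone: Euler's formula shows that if every bounded face were a triangle or quadrilateral, then there would be exactly $(n-2)^2$ triangles and $\binom{n}{4}-\binom{n-2}{2}$ quadrilaterals, quantities that are nonnegative and perfectly consistent and so reveal no contradiction. Thus the argument must be genuinely geometric, relying on the precise order in which the edges from $v_1$ and from $v_2$ cross the diagonals bounding the second layer; establishing that order for all $n$ is, I expect, the crux of the whole proof.
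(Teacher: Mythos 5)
Your handling of $n\le 4$ and of the base case $n=5$ is fine, and your Euler-formula aside is correct (the counts $(n-2)^2$ and $\binom{n}{4}-\binom{n-2}{2}$ do check out, so counting alone indeed cannot finish the job). But for $n\ge 6$ there is a genuine gap, and you name it yourself: the dichotomy/recursion about how the ``pinching'' of the region above $p=v_1v_3\cap v_2v_n$ propagates inward is never carried out --- no invariant is formulated, and no proof is given that the process must terminate at a pentagon rather than producing only triangles and quadrilaterals. The structural reason your setup is hard to control is that the region above a hull edge $v_1v_2$ is sliced by \emph{two} fans of diagonals, the one through $v_1$ and the one through $v_2$, which cross each other in an order that varies from drawing to drawing (and deeper layers are additionally cut by diagonals such as $v_3v_n$, $v_4v_{n-1}$, and so on). Establishing that order for all drawings is exactly what you flag as ``the crux of the whole proof,'' and it is missing; as it stands the proposal is a plan, not a proof.

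The paper's proof avoids this difficulty entirely by choosing a probe region that is crossed by only \emph{one} fan. Let $p=v_1v_4\cap v_2v_5$ and let $T$ be the triangle $pv_2v_4$. The interior of $T$ is intersected by precisely the edges $e_i=v_3v_i$ with $i\in\{5,\dots,n\}\cup\{1\}$, and these all share the endpoint $v_3$, so they are pairwise noncrossing. Consequently, consecutive fan edges cut $T$ directly into faces of $D$, with no recursion to control: each $e_i$ enters $T$ through the side $v_2v_4$ and exits through $pv_4$ or through $pv_2$, both exits occur ($e_5$ uses $pv_4$ and $e_1$ uses $pv_2$), and if $k$ is the largest index in $\{5,\dots,n\}$ with $e_k$ exiting through $pv_4$, then the region of $T$ between $e_k$ and its successor in the fan is bounded by pieces of those two edges and of all three sides of $T$ --- a 5-face with apex $p$. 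Genericity is used exactly once: it guarantees that no $e_i$ passes through $p$, which is the only way this pentagon could degenerate into a quadrilateral. If you want to salvage your own approach, this is the idea to import: anchor the argument at a region whose crossing edges form a single fan, and the ordering problem you identified as the main obstacle never arises.
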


On the other hand, we can provide examples of generic convex drawings of $K_n$ with arbitrarily large $n$ that do not contain any $k$-face with $k \geq 6$.

\begin{theorem}
\label{thm-No6faceGeneric}
For every positive integer $n$, there is a generic convex drawing of $K_n$ that does not contain any $k$-face with $k \geq 6$.
\end{theorem}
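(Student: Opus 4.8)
The plan is to prove the statement by induction on $n$, building the drawing by repeated \emph{vertex splitting}. The base cases $n\le 5$ are immediate: for $n\le 4$ every bounded face of a convex drawing is a triangle, and a generic convex pentagon produces one central $5$-face surrounded by triangles, so none of these drawings has a $k$-face with $k\ge 6$. For the inductive step, suppose $D_n$ is a generic convex drawing of $K_n$ with no $k$-face for $k\ge 6$ and with vertices $v_1,\dots,v_n$ in convex position. I would obtain $D_{n+1}$ by placing a new vertex $v_{n+1}$ at distance $\varepsilon$ from $v_1$, just outside the hull, so that $v_n,v_{n+1},v_1$ are consecutive on the new convex hull. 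Then $v_{n+1}v_1$ is a short new hull edge, while for every $j\in\{2,\dots,n\}$ the new edge $v_{n+1}v_j$ lies within distance $O(\varepsilon)$ of $v_1 v_j$; intuitively, splitting $v_1$ replaces each chord $v_1 v_j$ by two almost-coincident chords $v_1 v_j$ and $v_{n+1}v_j$.

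First I would analyse the effect of this doubling \emph{away} from the two special vertices $v_1$ and $v_{n+1}$. The key local fact is that cutting a convex $k$-gon by a segment that runs close and nearly parallel to one of its edges splits it into a quadrilateral \emph{sliver} and a remaining $k$-gon; likewise, a segment emanating from a vertex of the face and hugging an incident edge cuts off only a triangular sliver and leaves a $k$-gon. For sufficiently small $\varepsilon$ each companion chord $v_{n+1}v_j$ only meets faces that have an edge on (or a vertex at an endpoint of) the chord $v_1 v_j$. Hence every face $F$ of $D_n$ not incident to $v_1$ is either untouched or cut into faces of size at most $\max(4,\mathrm{size}(F))$. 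Since $\mathrm{size}(F)\le 5$ by the inductive hypothesis, no $k$-face with $k\ge 6$ is created outside a small neighbourhood of $\{v_1,v_{n+1}\}$.

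The main obstacle is the local picture around the split vertex, where an entire new fan $v_{n+1}v_2,\dots,v_{n+1}v_{n}$ together with the short hull edge $v_{n+1}v_1$ is inserted next to $v_1$. Here the faces incident to $v_1$ in $D_n$ are reshaped by two interleaved fans emanating from the nearby apices $v_1$ and $v_{n+1}$, and the doubling argument above no longer applies verbatim because the companion chords are not nearly parallel near their common endpoints. I would handle this by listing, in angular order around $v_1$, the wedges between consecutive chords and describing the constant bounded-size face pattern produced in each wedge by the paired chords $v_1 v_j$, $v_{n+1}v_j$ and by $v_{n+1}v_1$, checking directly that all faces appearing in this neighbourhood have size at most $5$ (in fact at most $4$).

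Finally, I would verify that genericity is preserved: for all sufficiently small $\varepsilon$ no new heavy crossing appears near $\{v_1,v_{n+1}\}$, and choosing the displacement of $v_{n+1}$ generically prevents any three chords from becoming concurrent, so $D_{n+1}$ is again generic with no $k$-face for $k\ge 6$. As an independent sanity check I would also confirm the conclusion on an explicit coordinate family realising exactly this iterated splitting, for instance points on a strictly convex curve with rapidly shrinking gaps, which should make the whole neighbourhood analysis uniform across all $n$.
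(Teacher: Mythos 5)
Your construction (splitting a hull vertex into two nearby copies) is genuinely different from the paper's, which instead places each new vertex far \emph{above} the previous drawing, so that all new edges cut the old faces along nearly vertical lines, and maintains the tailor-made invariant that every bounded face is 4-cap free and 5-cup free. Your route can in fact be completed, but as written it has a gap exactly at the step you yourself flag as the main obstacle, and the one concrete claim you make about that step is not right. The issue is that your near/far dichotomy does not cover the critical faces. When $v_1$ is split, the faces of $D_n$ incident to $v_1$ do not decompose into faces that live either ``away from the split'' or ``inside a small neighbourhood of $\{v_1,v_{n+1}\}$'': they are reshaped into faces having some vertices at distance $O(\varepsilon)$ from $v_1$ and other vertices on the far boundary of the old face. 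Concretely, fix $j\geq 3$ and let $F$ be the face of $D_n$ with apex $v_1$ lying between $v_1v_j$ and $v_1v_{j+1}$. The chords $v_{n+1}v_2,\dots,v_{n+1}v_{j-1}$ cross $F$ transversally near $v_1$, while $v_{n+1}v_j$ enters $F$ through $v_1v_{j+1}$ (near $v_1$) and exits through the far edge of $F$ adjacent to its edge on $v_1v_j$. One of the resulting pieces is bounded by parts of the five segments $v_{n+1}v_{j-1}$, $v_1v_{j+1}$, $v_{n+1}v_j$, the old chord of $D_n$, and $v_1v_j$: a genuine 5-face stretching from the $\varepsilon$-neighbourhood of $v_1$ to the far side of $F$. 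So either your parenthetical ``in fact at most 4'' is false (such pentagons are unavoidable), or, if you only count faces wholly contained in the neighbourhood, the claim is true but useless, because these hybrid faces are then examined by neither of your two steps. Whether they can reach size 6 is precisely the content of the theorem, and the proposal never addresses it.

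The good news is that the wedge-by-wedge bookkeeping does close the induction with the invariant ``every bounded face has at most 5 vertices'': in the wedge above, the pieces of $F$ have sizes $3$, $4,\dots,4$, $5$, and $\lvert F\rvert$, the slivers cut off away from $v_1$ are triangles or quadrilaterals with the remainders keeping their old size, and the region between $v_1v_{n+1}$, $v_{n+1}v_n$ and $v_1v_n$ is fanned into triangles by the chords $v_{n+1}v_k$. But this local analysis, not the sliver argument, is the heart of the proof, and it is exactly what is missing; until it is carried out, the proposal does not establish the theorem. This is also a fair illustration of what the paper's formulation buys: its cap/cup-freeness invariant is chosen so that the (nearly vertical) new edges preserve it almost trivially, avoiding any delicate analysis around a doubled vertex, at the cost of a less intuitive invariant than your direct bound on face sizes.
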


Thus, in the case of generic convex drawings of $K_n$, we can settle the question about the largest face we can always find completely.
A $k$-face with $k \in\{3,4,5\}$ is guaranteed in all sufficiently large drawings, while faces of sizes larger than 5 can be avoided (even simultaneously).
The problem, however, becomes significantly more difficult if we allow heavy crossings.

We were not able to find a $k$-face with $k \geq 5$ in every sufficiently large convex drawing of~$K_n$.
In fact, finding larger faces becomes surprisingly difficult already for regular drawings of~$K_n$.
Here, however, we can at least show that a 5-face always exists in all sufficiently large regular drawings of $K_n$.
In fact, we can even precisely characterize the values of $n$ for which a regular drawing of $K_n$ contains a 5-face.

\begin{theorem}
\label{thm-5faceRegular}
For a positive integer $n$, a regular drawing of $K_n$ contains a 5-face if and only if $n \notin \{1,2,3,4,6,8,12\}$.
\end{theorem}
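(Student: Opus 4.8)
The plan is to split on the parity of $n$, because the odd case collapses immediately into the generic setting already treated. By the result of Poonen and Rubinstein~\cite{poRu98}, every regular drawing of $K_n$ with $n$ odd has exactly $\binom{n}{4}$ crossings and is therefore generic, so for odd $n \ge 5$ a 5-face exists by Theorem~\ref{thm-5faceGeneric}. For $n \in \{1,3\}$ there are too few vertices for a pentagonal bounded face, which is consistent with $\{1,3\}$ lying in the exceptional set. This disposes of every odd $n$, and it remains to treat even $n$, where the regular drawing is never generic — the $n/2$ long diagonals already meet at the center — so Theorem~\ref{thm-5faceGeneric} no longer applies and the heavy crossings must be confronted directly. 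It is worth noting that a naive perturbation argument does not work here: perturbing to a generic convex drawing produces a 5-face, but that face may collapse onto a heavy crossing as one returns to the regular position, which is exactly what happens for the exceptional even values, so an explicit, non-collapsing pentagon is needed.

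For the small even exceptions $n \in \{2,4,6,8,12\}$ I would show that no 5-face exists. The cases $n \in \{2,4\}$ are immediate by inspection. For $n \in \{6,8,12\}$ the verification is finite: using the dihedral symmetry group $D_n$ of the drawing I would reduce the set of faces to a handful of orbit representatives and check that none is a pentagon, as already illustrated by Figure~\ref{fig-K8} and recorded in Table~\ref{tab-OEIS}. The conceptual reason these three values misbehave is that the regular $6$-, $8$-, and $12$-gons are precisely the small cases with an abundance of concurrent diagonals (three or more chords through a common point); the Poonen–Rubinstein classification of such concurrences~\cite{poRu98} explains why the regions that would otherwise be pentagons are split or merged.

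The heart of the proof is the positive direction for the remaining even values, namely $n = 10$ and all even $n \ge 14$, where I would exhibit an explicit pentagonal face. Placing the vertices at $P_k=(\cos(2\pi k/n),\sin(2\pi k/n))$ and exploiting the reflection of the $n$-gon that fixes the edge $\{0,1\}$ (swapping $0\leftrightarrow 1$, $2\leftrightarrow n-1$, and so on), I would propose a candidate pentagon symmetric about the perpendicular bisector of $\{0,1\}$, bounded by five chords of small index-difference — one axis-fixed chord together with two reflected pairs — with its top vertex at the axis crossing of $\{0,2\}$ and $\{1,n-1\}$. The construction has two parts: first, show by a direct coordinate computation that these five segments meet pairwise in the correct cyclic order, so their common bounded region is a convex pentagon; second, and this is the crux, show that no other chord of $K_n$ enters the interior of this region.

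The emptiness check is where I expect the main difficulty, and it is exactly what pins down the exceptional set. By the reflection symmetry I can restrict to chords on one side of the axis, and because the candidate region sits near the boundary, a monotonicity argument in the endpoint indices should reduce the statement ``no chord crosses the pentagon'' to finitely many explicit inequalities between trigonometric expressions in $n$. I expect these inequalities to hold for $n=10$ and for all even $n\ge 14$, while failing for $n\in\{6,8,12\}$ precisely because a diagonal of the regular $6$-, $8$-, or $12$-gon passes through the candidate region or through one of its would-be vertices (creating a heavy crossing) — the same concurrence phenomenon classified in~\cite{poRu98}. Verifying that the inequalities degenerate at exactly these three values and at no larger even $n$ is the delicate quantitative step; once it is established, combining the odd case, the finite negative cases, and this construction yields the stated characterization.
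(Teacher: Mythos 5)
Your reduction of the odd case to Theorem~\ref{thm-5faceGeneric} via the Poonen--Rubinstein crossing count, and your finite verification of the negative cases $n\in\{6,8,12\}$, match the paper. The gap is in your positive construction for even $n$, and it is twofold.

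First, your candidate pentagon does not exist: the face whose top vertex is the axis crossing of $\{0,2\}$ and $\{1,n-1\}$ is a \emph{quadrilateral} in every regular drawing, not a pentagon. Place $P_0,P_1$ symmetrically about the $y$-axis, so $P_k$ sits at angle $\pi/2+(1-2k)\pi/n$. A chord joining the $i$-th vertex left of the axis to the $j$-th vertex right of it meets the axis at height $\cos\bigl((i+j)\pi/n\bigr)/\cos\bigl((i-j+1)\pi/n\bigr)$. Your top vertex $X=P_0P_2\cap P_1P_{n-1}$ lies at height $\cos(2\pi/n)/\cos(\pi/n)$, and the next chords down are the reflected pair $P_0P_3,\,P_1P_{n-2}$, which cross the axis at height $\cos(3\pi/n)/\cos(2\pi/n)$; this value is strictly between the height of $X$ and the heights $\cos(3\pi/n)$ and $\cos(4\pi/n)/\cos(3\pi/n)$ of all other nearby chords (the comparisons reduce to $\cos(2\pi/n)<1$ and $1-\cos(2\pi/n)>0$). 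Moreover, no chord can clip the side corners (any chord crossing $P_0P_2$ passes through $P_1$, and any chord crossing $P_1P_{n-2}$ passes through $P_0$ or $P_{n-1}$), and the only chord that could clip the bottom corner, namely $P_2P_{n-1}$, always passes strictly below it. So the face below $X$ is the 4-face bounded by $P_0P_2$, $P_1P_{n-1}$, $P_0P_3$, $P_1P_{n-2}$, for every $n$: your emptiness check fails at the very first pair of chords. The paper's candidate sits deeper: it is a region $R_b$ bounded by two reflected pairs through $v_2$ and $v_3$ whose index offset $b$ is \emph{chosen adaptively} as the place where the axis-fixed chord $v_1v_4$ crosses the chain of such regions; no fixed set of index differences works for all $n$.

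Second, your claim that the quantitative step degenerates exactly at $\{6,8,12\}$ and at no larger even $n$ is false, and this is not a detail: in the regular $18$-gon the chords $v_1v_4$, $v_2v_7$, $v_3v_{17}$ are concurrent (a heavy crossing corresponding to a type~III solution of~\eqref{eq-heavyIntersections} with $t=1/18$), so the natural boundary-adjacent candidate degenerates at $n=18$ as well. This also shows why "finitely many trigonometric inequalities valid for all even $n\ge 14$" is the wrong framework: the obstruction is a concurrency, i.e.\ an \emph{equality} that holds only for sporadic rational parameters, and deciding for which $n$ it occurs is a trigonometric Diophantine problem. The paper handles it by reducing failure of the construction to two specific heavy crossings $C_1$, $C_2$ and invoking the Poonen--Rubinstein classification (Theorem~\ref{thm-PoRu}) to show these occur only for $n=12$ and $n\in\{8,18\}$ respectively; the case $n=18$ then genuinely requires exhibiting a different 5-face by hand (Figure~\ref{fig-K18}). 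Your plan, even after repairing the candidate, would still need exactly this machinery and the extra $K_{18}$ argument.
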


The proof of Theorem~\ref{thm-5faceRegular} is quite involved and is based on the results obtained by Poonen and Rubinstein~\cite{poRu98}.

Finally, although we were not able to find a 5-face in all sufficiently large convex drawings of $K_n$, we can at least show that every convex drawing of $K_7$ contains at least one.

\begin{proposition}
\label{prop-K7}
Every convex drawing of $K_7$ contains a 5-face.
\end{proposition}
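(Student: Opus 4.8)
The plan is to split according to whether $D$ is generic. If $D$ is generic, then since $7 \geq 5$, Theorem~\ref{thm-5faceGeneric} immediately produces a 5-face, so from now on I would assume that $D$ has at least one heavy crossing. The first step is to pin down what a heavy crossing of a convex drawing of $K_7$ can be. If $k$ edges pass through a common interior point $p$, then no two of them share an endpoint, since two segments emanating from a common vertex meet only at that vertex (no three of our points are collinear); hence these $k$ edges use $2k$ distinct vertices, and with only $7$ vertices available this forces $k \leq 3$. So every heavy crossing is a triple point, and the three edges through it are three pairwise-crossing chords on $6$ of the $7$ vertices. As the unique pairwise-crossing perfect matching of six points in convex position, these chords are exactly the three main diagonals of the convex hexagon spanned by those $6$ vertices, the seventh vertex being omitted. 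In particular there are at most seven heavy crossings, one per omitted vertex, and each lies deep in the interior of its sub-hexagon.

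Next I would compare $D$ with a generic perturbation. Moving the seven points by an arbitrarily small amount, keeping them in convex position, resolves every heavy crossing and yields a generic convex drawing $D'$ of $K_7$, which has a 5-face $F'$ by Theorem~\ref{thm-5faceGeneric}. The effect of the perturbation is purely local around each heavy crossing $p$: the three concurrent diagonals separate into a small triangle bounded by the three chords, so a new triangular face appears at $p$ and exactly three of the six faces meeting at $p$ (those facing a side of the small triangle) gain one side, while the other three keep their size. Reading this in reverse, collapsing $D'$ back to $D$ destroys only the small triangles and shrinks, by one side each, at most $3h$ faces, where $h \leq 7$ is the number of heavy crossings; every other face of $D'$ keeps its size in $D$. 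Consequently a 5-face of $D'$ descends to a 5-face of $D$ \emph{unless} it is one of these at most $3h$ distinguished faces, in which case it is only a 4-face of $D$.

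The crux, and the step I expect to be the main obstacle, is therefore to guarantee that \emph{some} 5-face survives the collapse, that is, to rule out the degenerate possibility that every 5-face of $D'$ sits edge-on at a resolved heavy crossing. I see two complementary ways to close this gap. The first exploits the location of the heavy crossings: since each lies deep inside a sub-hexagon, a 5-face found in a boundary layer of the drawing, say among the faces incident to the short diagonals $v_i v_{i+2}$ (indices cyclically) that cut off the seven ears, cannot be incident to any heavy crossing and hence persists; this route requires showing that the argument of Theorem~\ref{thm-5faceGeneric} can be made to output such a boundary 5-face when $n = 7$. The second, more robust route is a finite case analysis enabled by the combinatorial rigidity of convex position: the set of crossing \emph{pairs} of a convex drawing of $K_7$ is the same for all such drawings, so the only data that vary are the orders of crossings along the edges and the coincidence pattern of the heavy crossings, of which there are finitely many admissible types. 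One would enumerate these types and use the freedom in the perturbation, namely that at each heavy crossing either of the two alternating triples of surrounding faces can be chosen as the one that loses a side, to keep at least one 5-face in every type. I expect the abundance of 5-faces in a generic convex $K_7$ (the regular drawing alone has seven) to make this verification go through comfortably; organizing the case distinction so that it is simultaneously complete and short is the part that will need the most care.
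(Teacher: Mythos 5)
Your reduction to the non-generic case and your structural analysis of heavy crossings are both correct: in a convex drawing of $K_7$ any heavy crossing is a concurrence of exactly three pairwise-crossing diagonals on six of the seven vertices, hence of the three ``main'' diagonals of a sub-hexagon, so there are at most seven of them, one per omitted vertex. (This is in fact an ingredient of the paper's own proof.) However, the step you yourself flag as the crux --- that some 5-face of the perturbed generic drawing $D'$ survives the collapse back to $D$ --- is left unproven, and neither of your two suggested routes closes it. Your second route is an unexecuted enumeration; ``I expect the abundance of 5-faces to make this go through comfortably'' is not an argument, and organizing a complete case analysis over coincidence patterns is precisely the hard part. Your first route rests on a false premise: a face incident to a short diagonal \emph{can} be incident to a heavy crossing. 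For instance, if $v_1v_4$, $v_2v_5$, $v_3v_6$ are concurrent at a point $p$, then the face of $D$ with vertex $p$ lying in the sector between the rays from $p$ towards $v_4$ and towards $v_5$ is also bounded by the short diagonals $v_3v_5$ and $v_4v_6$. Worse, the 5-face produced by the proof of Theorem~\ref{thm-5faceGeneric} has the crossing $v_1v_4\cap v_2v_5$ as one of its vertices, which is exactly where a heavy crossing may sit; so the face that argument outputs is the one most at risk, not one that is automatically safe.

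For comparison, the paper closes this gap with a parity argument and no perturbation at all, and you could graft it directly onto your setup. For each $i$ (indices modulo $7$), the central 5-face of the induced drawing on $v_i,\dots,v_{i+4}$ is cut in $D$ only by the edges $v_{i+2}v_{i+5}$ and $v_{i+2}v_{i+6}$, and this yields a 5-face of $D$ with vertex $p_i=v_iv_{i+3}\cap v_{i+1}v_{i+4}$ unless one of the triples $t_i=(v_iv_{i+3},v_{i+1}v_{i+4},v_{i+2}v_{i+5})$ or $t_{i-1}$ is concurrent. If $D$ had no 5-face, then for every $i$ at least one of $t_{i-1},t_i$ would be concurrent; since $7$ is odd, two cyclically consecutive triples would both be concurrent, and since consecutive triples share two edges, this would force four diagonals of the form $v_jv_{j+3}$ through a single point --- impossible, because among any four such diagonals on seven vertices two share an endpoint (your own $2k\le 7$ count). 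In short, the ``survival'' statement you postponed is exactly where the real content of the proposition lies, and it is settled by this combinatorial argument rather than by perturbation.
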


The remainder of the paper is organized as follows.
We first prove Proposition~\ref{prop-3-4faces} in Section~\ref{sec-3-4faces}.
Then, Theorems~\ref{thm-5faceGeneric} and~\ref{thm-No6faceGeneric} are proved in Sections~\ref{sec-5faceGeneric} and~\ref{sec-No6faceGeneric}, respectively.
The proofs of Theorem~\ref{thm-5faceRegular} and Proposition~\ref{prop-K7} can be found in Sections~\ref{sec-5faceRegular} and~\ref{sec-K7}, respectively.
Finally, we state some open problem and possible directions for future research in Section~\ref{sec-openProblems}.

\section{Finding 3- and 4-faces in convex drawings}
\label{sec-3-4faces}
%\propfaces*

For a positive integer $n$, let $D$ be a convex drawing of $K_n$.
We show that $D$ contains a 3-face if and only if $n \geq 3$ and that $D$ contains a 4-face if and only if $n \geq  6$.

Since $D$ is convex, the boundary of its convex hull is a convex $n$-gon with vertices formed by the points representing the vertices of $K_n$.
Let $v_1,\dots,v_n$ be the vertices of $D$ traced in this order along the boundary of the convex hull in the, say, clockwise direction.

Obviously, $D$ does not contain a 3-face if $n \leq 2$.
Now, if $n \geq 3$, then it suffices to consider the vertex $v_2$ of $D$.
The face $F$ of $D$ bounded by $v_1v_2$, $v_1v_3$ and $v_2v_n$ is a 3-face as there are no edges of $D$ that can intersect $F$; see part~(a) of Figure~\ref{fig-3-4faces}.

\begin {figure}[ht]
  \centering
  \includegraphics[width=\textwidth]{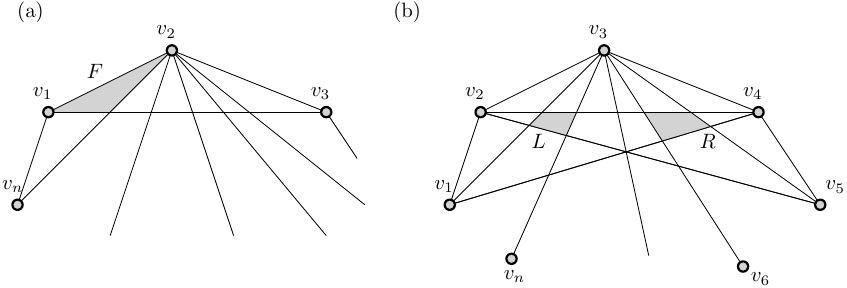}
  \caption {An illustration of the proof of Proposition~\ref{prop-3-4faces}. Finding a 3-face (part~(a)) and a 4-face (part~(b)).}
  \label{fig-3-4faces}
\end {figure}

It is easy to check that no convex drawing of $K_n$ contains a 4-face if $n\leq 5$ as each such drawing is essentially unique.
Now, assume $n \geq 6$.
Consider the drawing $D'$ induced by vertices $v_1,\dots,v_5$ in~$D$.
Let $F$ be the 5-face of $D'$ bounded by the line segments $v_iv_{i+2}$ with $i \in \{1,\dots,5\}$ (indices taken modulo 5).
Note that $F$ is not incident to any vertex of $D$.
Since $n \geq 6$, we have $v_n \notin \{v_1,\dots,v_5\}$.
The line segment $v_3v_n$ then splits $F$ into two parts where the left part $L$ is a face of $D$; see part~(b) of Figure~\ref{fig-3-4faces}.
Similarly, the line segment $v_3v_6$ splits $F$ into two parts where the right part $R$ is a face of $D$.
It now suffices to observe that at least one of the faces $L$ and $R$ is a 4-face in $D$ as every line segment $v_3v_i$ with $i \in \{6,7,\dots,n\}$ splits $F$ into two parts, where at least one is a convex 4-gon.

\section{Finding 5-faces in generic convex drawings}
\label{sec-5faceGeneric}
%\thmone*
For a positive integer $n$, let $D$ be a generic convex drawing of $K_n$.
We show that $D$ contains a 5-face if and only if $n\geq 5$.

Clearly, $D$ does not contain a 5-face if $n \leq 4$.
Now, if $n=5$, then it is easy to see that there is always a 5-face in $D$ that is not incident to any vertex of $D$.
So we assume that $n \geq 6$.
Let $v_1,\dots,v_n$ be the vertices of $D$ traced in this order along the boundary of the convex hull in the, say, clockwise direction.
%We consider the drawing $D'$ induced by vertices $v_1,\dots,v_5$ of~$D$.
%By the claim used for $n=5$, there is a face $F$ of $D'$ that is a 5-face not incident to any vertex of $D$; see Figure~\ref{fig-5faceGeneric}.
%We will now add the remaining edges of~$D$ one by one and show that there is at least one 5-face remaining.

\begin {figure}[ht]
  \centering
  \includegraphics[width=\textwidth]{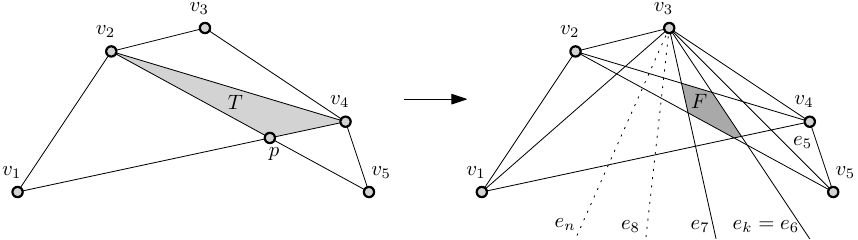}
  \caption {An illustration of the proof of Theorem~\ref{thm-5faceGeneric}.}
  \label{fig-5faceGeneric}
\end {figure}

%Since the vertices $v_1,\dots,v_5$ are consecutive on the boundary of the convex hull of $D$, the only edges of $D$ that can intersect the interior of $F$ are edges $e_i = v_3v_i$ with $i \in \{6,\dots,n\}$.
%We prove by induction on $i$ that after placing edges $e_6,\dots,e_i$ for some $i \geq 6$, there is still a 5-face $F_i$ contained in $F$.
%If $i=5$, then the claim is trivially true as $F_5 = F$ is a 5-face.
%Thus, assume that $i \geq 6$ and that the claim is true for $i-1$.
%If $e_i \cap F_{i-1} = \emptyset$, then we set $F_i=F_{i-1}$ and we are done.
%Otherwise, the 5-face $F_{i-1}$ can be split by the edge $e_i$ only into a 4-face and a 5-face since $e_i$ can intersect only two disjoint edges of $F_{i-1}$.
%This is because $v_3,v_i \notin F \supset F_{i-1}$ and because the drawing $D$ is generic and thus the edge $e_i$ cannot intersect a vertex of~$F_{i-1}$.
%We then let $F_i$ be the 5-face obtained by splitting $F_{i-1}$ with $e_i$.
%Altogether, after all remaining edges of $D$ are placed, the face $F_n$ is a 5-face in $D$.
Let $p=v_1v_4\cap v_2v_5$ and consider the triangle $T=pv_2v_4$; see Figure~\ref{fig-5faceGeneric}.
This triangle is intersected by precisely the edges $e_i=v_3v_i$ for $i \in \{5,\dots,n\} \cup \{1\}$. 
Since the drawing $D$ is generic, no edge $e_i$ passes through $p$, and, moreover, both sides $pv_4$ and $pv_2$ of the triangle $T$ are crossed at least once (by $e_5$ and $e_1$, respectively). 
Let $k$ be the maximum integer from $\{5,\dots,n\}$ such that $e_k$ crosses $pv_4$. Then $e_k$, $e_{k\pmod n+1}$, and the three sides of $T$ determine a 5-face $F$ (with one vertex $p$).

\section{Generic convex drawings with no 6-faces}
\label{sec-No6faceGeneric}

%\thmtwo*
We prove that, for every positive integer $n$, there is a generic convex drawing of~$K_n$ that does not contain a $k$-face with $k \geq 6$.
We apply a similar construction to the one used by Balko et al.~\cite{bcgghvw22}.
It is also the planar case of the construction used by Bukh, Matou\v{s}ek, and Nivasch~\cite{bmn10}.

First, we state some auxiliary definitions.
For an integer $k \geq 3$, a set of $k$ points in the plane is a \emph{$k$-cup} if all its points lie on the graph of a convex function.
Similarly, a set of $k$ points is a \emph{$k$-cap} if all its points lie on the graph of a concave function.
Clearly, $k$-cups and $k$-caps are sets of points in convex position.
A convex polygon $P$ is \emph{$k$-cap free} if no $k$ vertices of $P$ form a $k$--cap.
Note that $P$ is $k$-cap free if and only if it is bounded from above by at most $k-2$ segments (edges of $P$).
Analogously, $P$ is \emph{$k$-cup free} if no $k$ vertices of $P$ form a $k$--cup.
Observe that vertices of a $k$-face determine an $a$-cap and a $u$-cup that share the leftmost and the rightmost vertex and satisfy $a+u = k+2$.
We use $e(P)$ to denote the leftmost edge bounding $P$ from above; see part~(a) of Figure~\ref{fig-drawing}.

\begin{figure}
    \centering
    \includegraphics{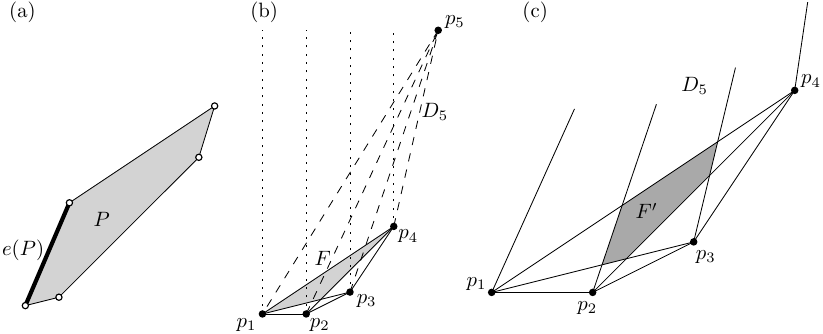}
    \caption{(a) A 4-cap free and 5-cup free polygon $P$ that is not 3-cap free nor 4-cup free. (b) An example of constructing the generic convex drawing $D_n$ for $n=5$. If the point $p_n$ is chosen sufficiently high above $V(D_{n-1})$, then each line segment $\overline{p_i p_n}$ with $i < n$ is very close to the vertical line containing $p_i$ and thus all faces of $D_n$ will be 4-cap free and 5-cup free. (c) The face $F$ of $D_{n-1}$ is split into new faces of~$D_n$ and contains the face $F'$ that is 4-cap free and 5-cup free but not 3-cap free nor 4-cup free.}
    \label{fig-drawing}
\end{figure}

We inductively construct a certain generic convex drawing $D_n$ of $K_n$ with vertices represented by points $p_1,\dots,p_n$ that form an $n$-cup in the plane and their $x$-coordinates satisfy $x(p_i) = i$; see part~(b) of Figure~\ref{fig-drawing}.
Let $V(D_n)$ denote the vertex set of $D_n$. We recall that we identify the vertices of $K_n$ and the points from $D_n$ representing them.
We let  $V(D_1)=\{(1,0)\}$ and $V(D_2)=\{(1,0),(2,0)\}$.
Now, assume that we have already constructed the drawing $D_{n-1}$ with $V(D_{n-1}) = \{p_1,\dots,p_{n-1}\}$ for some integer $n \geq 3$.
We choose a sufficiently large number $y_n$, and we let $p_n$ be the point $(n,y_n)$.
We then set $V(D_n) = V(D_{n-1}) \cup \{p_n\}$ and we let $D_n$ be the drawing of $K_n$ on this vertex set.
The number $y_n$ is chosen large enough so that the following three conditions are satisfied:
\begin{enumerate}
\item for every $i=1,\dots,n-1$, each intersection point of two line segments spanned by points from $V(D_{n-1})$ lies on the left side of the line $\overline{p_ip_n}$ if and only if it lies to the left of the vertical line $x=i$ containing the point $p_i$, 

\item if $F$ is a 4-cap free face of $D_n$ that is not 3-cap free, then there is no point $p_i$ below the (relative) interior of $e(F)$,

\item no crossing of two edges of $D_n$ lies on the vertical line containing some point~$p_i$.
\end{enumerate}

Choosing the point $p_n$ is indeed possible as for a sufficiently large $y$-coordinate $y_n$ of $p_n$ we get that for each $i$, all the intersections of the line segments $p_ip_n$ with line segments of $D_{n-1}$ lie very close to the vertical line $x=i$ containing the point~$p_i$.
Observe that no line segment of $D_n$ is vertical and that there are no heavy crossings in $D_n$.
Since the points $p_1,\dots,p_n$ form an $n$-cup, they are in convex position and $D_n$ is a generic convex drawing of~$K_n$.

It remains to prove that there are no $k$-faces with $k \geq 6$ in $D$.
To show that, we use the following lemma.

\begin{lemma}
  \label{lem-drawing-faces}
Each bounded face of $D_n$ is a 4-cap free and 5-cup free convex polygon.
\end{lemma}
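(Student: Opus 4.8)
The plan is to prove the equivalent assertion that every bounded face of $D_n$ is bounded from above by at most two edges and from below by at most three edges, which by the characterisation recalled before the lemma is exactly the statement that the face is $4$-cap free and $5$-cup free. I would argue by induction on $n$. For $n\le 2$ there are no bounded faces, and $D_3$ is a single triangle whose three vertices form a $3$-cup, so it has one upper and two lower edges. For the inductive step I pass from $D_{n-1}$ to $D_n$ by inserting the apex $p_n$ together with the fan of edges $\overline{p_ip_n}$, $i<n$. By the first condition imposed on $y_n$, each such edge is so nearly vertical inside the region of $D_{n-1}$ that it crosses a bounded face $F'$ of $D_{n-1}$ exactly when the vertical line $x=i$ does, entering through the lower boundary of $F'$ and leaving through its upper boundary; moreover distinct fan edges meet only at $p_n$. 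Thus the fan cuts each face $F'$ of $D_{n-1}$ into almost-vertical \emph{slabs}, and every bounded face of $D_n$ is either such a slab or a face incident to $p_n$.

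I first treat the upper bound of two edges. Since every fan edge leans to the right as it ascends, on the boundary of a slab it appears as the first edge of the upper chain of the slab lying to its right and as the last edge of the lower chain of the slab lying to its left; in particular a slab carries at most one fan edge on top, and two fan edges bound the same face from above only when they meet, i.e.\ for a face incident to $p_n$, whose upper chain then consists of exactly those two fan edges. If the upper chain of $F'$ has a single edge, then each slab inherits at most one old upper edge and gains at most one fan edge, for a total of at most two. The only danger is a slab that inherits \emph{both} upper edges of a face $F'$ whose upper chain has two edges, that is, the slab straddling the vertex $m$ where these two edges meet. Here I invoke the second condition imposed on $y_n$ at the previous step, which guarantees that no point $p_i$ lies below the interior of the leftmost upper edge $e(F')$. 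As $p_i$ lies on the lower hull at $x=i$ and $e(F')$ runs above the interior of $F'$, this forces the interior of the $x$-range of $e(F')$ to contain no integer; since $e(F')$ begins at the leftmost vertex of $F'$, the nearest cut to the left of $m$ then lies strictly left of $F'$ and so misses it. Consequently the slab straddling $m$ has no fan edge on its left and its upper chain consists of the two old edges only, while every other slab of $F'$ has at most one old upper edge. In all cases the slab has at most two upper edges, and the faces incident to $p_n$ have exactly two.

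It remains to bound the lower chain by three edges, which I would establish by the mirror analysis of the lower boundary. Now a fan edge contributes at most one lower edge to the slab on its left, and a slab inherits a sub-chain of the lower boundary of $F'$, which by induction has at most three edges. The larger budget of three edges is what absorbs the single added fan edge, provided each slab collects at most one of the (at most two) valleys of $F'$; ensuring this is the main obstacle, and it is where I expect the real work to lie. The point is to show that the two interior lower vertices of any face of $D_{n-1}$ never fall together into one open unit $x$-strip, so that consecutive integer cuts always separate them; this follows from tracking how the valleys are created---old valleys inherited from earlier stages and the new valley produced at the foot of each right-hand fan edge near an integer abscissa---using the first and third conditions on $y_n$ to keep these vertices off the dividing vertical lines and in distinct strips. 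The faces incident to $p_n$ are handled by the same count, their lower chain being a slab-type chain. This establishes both bounds for $D_n$ and closes the induction.
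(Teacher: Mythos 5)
Your treatment of the upper chains (the 4-cap-free half) is correct and is essentially the paper's own argument: the only dangerous slab is the one containing the apex of a two-edge upper chain, and condition 2 applied at the previous step guarantees that no fan edge crosses the relative interior of $e(F')$, so that slab is in fact the leftmost one and picks up no fan edge on its left. One small inaccuracy: two fan edges can never both lie on the upper chain of a face. Since $p_n$ has the largest $x$-coordinate of all points, it is the \emph{rightmost} vertex of every face containing it, so of the two fan edges bounding such a face one is an upper edge and the other a lower edge; this is harmless for your bound of two, but it signals that the cup side needs more care than a ``mirror analysis.''

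The genuine gap is in the 5-cup-free half, and you say so yourself (``this is where I expect the real work to lie''). What must be excluded is exactly this configuration: a fan edge $\overline{p_ip_n}$ crossing the relative interior of the rightmost lower edge $e$ of a face $F$ of $D_{n-1}$ that already has three lower edges, with no earlier fan edge separating that crossing from the left valley of $F$. Your strip-separation invariant (``the two valleys never lie in one open unit $x$-strip'') is asserted, not proven, and as stated it is not even sufficient: a valley can be a graph vertex $p_j$, which lies exactly on a strip boundary, and a face can have valleys $c\in(j-1,j)$ and $p_j$ with no fan edge strictly between them; there the count is saved only because the fan edge issued from $p_j$ enters the face \emph{through the vertex} $p_j$, a case your invariant does not capture. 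Making this work would require a strengthened induction hypothesis and a full case analysis of how valleys are created. The paper needs none of this: it observes that if the dangerous fan edge exists, then $p_i$ lies below the interior of $e$; the face $F''$ of $D_{n-1}$ lying below $e$ is 4-cap free by induction; the rightmost vertex of $F$ must be a crossing (faces whose rightmost vertex is a point $p_j$ are 3-faces, hence 4-cup free, while $F$ is not), and from this it follows that $e$ is the leftmost upper edge of $F''$, that is, $e(F'')=e$; then condition 2 (applied at the previous step) forbids $p_i$ from lying below $e(F'')$ --- a contradiction. In short, the paper deduces the cup bound from the \emph{cap} condition applied to the neighbouring face below $e$, whereas your proposal, lacking either this reduction or a complete proof of your invariant, leaves the inductive step for cups open.
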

\begin{proof}
We prove both claims by induction on $n$.
Both statements are trivial for $n \leq 3$ so assume $n \geq 4$.
Now, let $F$ be a bounded face of $D_{n-1}$.

We first show that all faces of $D_n$ contained in $F$ are 4-cap free.
By the induction hypothesis, $F$ is a 4-cap free convex polygon.
If $F$ is 3-cap free, then, by the choice of $p_n$, the line segments $\overline{p_i p_n}$ 
split $F$ into 4-cap free polygons (with the leftmost one being actually 3-cap free); see part~(c) of Figure~\ref{fig-drawing}.
If $F$ is 4-cap free and not 3-cap free, then the choice of $p_n$ guarantees that the line segments $\overline{p_i p_n}$ split $F$ into 4-cap free polygons.
This is because the leftmost such polygon contains the whole edge $e(F)$ as there is no $p_i$ below the edge~$e(F)$.

Now, we prove that all faces of $D_n$ contained in $F$ are 5-cup free.
If $F$ is 4-cup free, then the choice of $p_n$ implies that the line segments $\overline{p_i p_n}$ 
split $F$ into 5-cup free polygons; see part~(c) of Figure~\ref{fig-drawing}.
Now assume that $F$ is 5-cup free and not 4-cup free.
Suppose for contradiction that $F$ contains a bounded face $F'$ of $D_n$ that is not 5-cup free.
Then, $F'$ is obtained from $F$ by some line segment $p_ip_n$ splitting the rightmost edge $e$ of the lower envelope of $F$.
By construction of $D_n$, the point $p_i$ then lies below the (relative) interior of $e$.
In particular, there is a face $F''$ of $D_{n-1}$ that shares $e$ with $F$.
The rightmost vertex of $F$ cannot be a vertex of $D_n$ as all faces in $D_{n-1}$ with $p_i$ as their rightmost vertex are 3-faces, which are 4-cup free.
Thus, the rightmost vertex of $F$ is a crossing in $D_{n-1}$.
Then, we have $e(F'')=e$.
However, this contradicts the second property from the construction of~$D_n$ as $p_i$ lies below $e(F'')$.
Thus, all faces of $D_n$ contained in $F$ are 5-cup free.

It remains to consider the inner faces of $D_n$ which lie outside of the convex hull of the points $p_1,\dots,p_{n-1}$. These faces lie inside the triangle $p_1 p_{n-1} p_n$.
They are all triangular and therefore are 3-cap free and also 4-cup free; see the three faces with the topmost vertex $p_n=p_5$ in part~(c) of Figure~\ref{fig-drawing}.
\end{proof}

Now, suppose for contradiction that there is a $k$-face $F$ in $D_n$ for some integer $k \geq 6$.
By Lemma~\ref{lem-drawing-faces}, the face $F$ is a 4-cap free and 5-cup free convex polygon.
On the other hand, the vertex set of $F$ is in convex position and thus determines an $a$-cap and a $u$-cup that share the leftmost and the rightmost vertex and satisfy $a+u \geq 8$.
Therefore, we either have $a \geq 4$ or $u \geq 5$. However, this contradicts the fact that $F$ is 4-cap free and 5-cup free.

\section{Finding 5-faces in regular drawings}
\label{sec-5faceRegular}

%\thmthree*
Let $n$ be a positive integer and let $D$ be a regular drawing of $K_n$.
We show that $D$ contains a 5-face if and only if $n \notin \{1,2,3,4,6,8,12\}$.

\subsection{Preliminaries}

To find a 5-face in $D$, we will need to analyze heavy crossings in $D$.
We do so by using methods applied by Poonen and Rubinstein~\cite{poRu98} that we now briefly describe.

\begin {figure}[ht]
  \centering
  \includegraphics{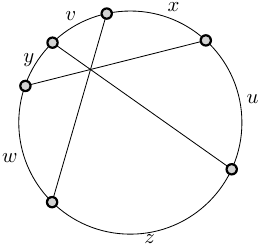}
  \caption {A heavy crossing and its parameters.}
  \label{fig-heavyIntersection}
\end {figure}

We assume without loss of generality that $D$ has vertices on a unit circle centered in the origin.
Consider three pairwise crossing edges with endpoints on a unit circle dividing up the circumference into arc lengths $u$, $x$, $v$, $y$, $w$, and $z$; see Figure~\ref{fig-heavyIntersection}.
It is not difficult to show that three such edges form a heavy crossing if and only if
\[\sin(u/2) \sin(v/2) \sin(w/2) = \sin(x/2) \sin(y/2) \sin(z/2);\]
see, for example,~\cite{poRu98}.
Thus, to determine when three edges form a heavy crossing, we need to characterize the positive rational solutions to the following system of equations:
\begin{equation}
\label{eq-heavyIntersections}
\begin{split}
\sin(\pi U)\sin(\pi V)\sin(\pi W) &= \sin(\pi X)\sin(\pi Y)\sin(\pi Z)\\
U+V+W+X+Y+Z &= 1,
\end{split}
\end{equation}
where we substitute $U:=u/(2\pi)$, $V:=v/(2\pi)$, $W:=w/(2\pi)$, $X:=x/(2\pi)$, $Y:=y/(2\pi)$, and $Z:=z/(2\pi)$.
Using a theory of trigonometric Diophantine equations~\cite{conJon76,mann65}, Poonen and Rubinstein~\cite{poRu98} classified the solutions of~\eqref{eq-heavyIntersections}.

\begin{theorem}[Theorem~4 in~\cite{poRu98}]
\label{thm-PoRu}
The positive rational solutions to~\eqref{eq-heavyIntersections}, up to symmetry, can be classified as follows:
\begin{enumerate}
    \item The trivial solutions that satisfy $U+V+W = 1/2$ and $X,Y,Z$ are a permutation of $U,V,W$.
    \item Four one-parameter families of solutions, listed in Table~\ref{tab-solutions}.
    \item Sixty-five “sporadic” solutions, listed in Table 4 in~\cite{poRu98}.
\end{enumerate}
\end{theorem}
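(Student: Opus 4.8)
The plan is to convert the trigonometric equation in~\eqref{eq-heavyIntersections} into a \emph{vanishing sum of roots of unity} and then apply the structure theory for such sums due to Mann~\cite{mann65} and Conway and Jones~\cite{conJon76}. The starting point is the product-to-sum identity
\[
4\sin A\sin B\sin C=\sin(A+B-C)+\sin(B+C-A)+\sin(C+A-B)-\sin(A+B+C),
\]
applied once with $(A,B,C)=(\pi U,\pi V,\pi W)$ and once with $(A,B,C)=(\pi X,\pi Y,\pi Z)$. Subtracting the two expansions reproduces the equality in~\eqref{eq-heavyIntersections} after clearing the factor $4$, and the linear constraint $U+V+W+X+Y+Z=1$ forces $\sin(\pi(U+V+W))=\sin(\pi(X+Y+Z))$, so the two full-sum sine terms cancel. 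What remains is the single relation
\[
\sin(\pi(U+V-W))+\sin(\pi(V+W-U))+\sin(\pi(W+U-V))=\sin(\pi(X+Y-Z))+\sin(\pi(Y+Z-X))+\sin(\pi(Z+X-Y)).
\]

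Since $\sin(\pi r)=\tfrac{1}{2i}(\zeta-\zeta^{-1})$ for the root of unity $\zeta=e^{i\pi r}$, moving everything to one side expresses this relation as a vanishing $\mathbb{Q}$-linear combination of at most twelve roots of unity. The key structural input is threefold: any such vanishing sum decomposes into \emph{minimal} vanishing subsums (those with no vanishing proper subsum); Mann's theorem~\cite{mann65} bounds the order of the roots of unity occurring in a minimal subsum in terms of the number of its terms; and Conway and Jones~\cite{conJon76} give an explicit finite list of all minimal vanishing sums with few terms, built from the primitive relations such as $1+\omega+\omega^2=0$ together with a handful of sporadic configurations on $N$-th roots of unity for small~$N$.

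Combining these ingredients, I would first use Mann's bound on our twelve-term sum to restrict the common denominator $N$ of $U,\dots,Z$ to finitely many values, and then, for each admissible $N$, enumerate the ways the six sine arguments can be partitioned into minimal blocks drawn from the Conway--Jones list. Each candidate partition then has to be pulled back through the definitions $U+V-W,\dots,Z+X-Y$ and checked for consistency: the six arguments are not free but are determined by the six original variables, which must be positive and must correspond to three chords that pairwise cross inside the circle. The surviving solutions sort into the three advertised types --- the trivial solutions, for which the two sides of the displayed relation agree termwise (equivalently $X,Y,Z$ is a permutation of $U,V,W$ with $U+V+W=1/2$); the four one-parameter families; and finitely many sporadic solutions.

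The hard part is the exhaustive case analysis in the final step rather than any single conceptual move. One must match each abstract Conway--Jones block to a tuple of sine arguments modulo the symmetries of the configuration (permuting $U,V,W$, permuting $X,Y,Z$, swapping the two triples, and the reflection $r\mapsto 1-r$), discard the numerous partitions that violate positivity or convex position, keep $N$ small enough that the search provably terminates, and finally confirm that exactly four families and sixty-five sporadic solutions remain. This bookkeeping --- not the reduction itself --- is precisely the computation carried out by Poonen and Rubinstein, and it is the principal obstacle to a clean self-contained proof.
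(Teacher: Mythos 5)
First, a point of orientation: the paper does not prove this statement at all --- it is imported verbatim as Theorem~4 of Poonen and Rubinstein~\cite{poRu98}, with only the remark that its proof rests on the theory of trigonometric Diophantine equations of Mann~\cite{mann65} and Conway and Jones~\cite{conJon76}. Your outline is therefore being compared against a citation, not an in-paper argument; and your reduction (product-to-sum expansion, cancellation of the two full-sum sine terms via $U+V+W+X+Y+Z=1$, rewriting the resulting six-sine identity as a vanishing rational combination of at most twelve roots of unity, then invoking the classification of minimal vanishing sums) is indeed the strategy Poonen and Rubinstein follow, so the conceptual route is the right one.

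As a proof, however, your text has two genuine gaps. The decisive one you concede yourself: the entire enumeration --- matching partitions of the twelve terms into minimal blocks from the Conway--Jones list, modding out by the symmetries, discarding inconsistent cases, and verifying that exactly four one-parameter families and sixty-five sporadic solutions survive --- is deferred, and that enumeration \emph{is} the theorem; without it you have only reproduced the (comparatively easy) reduction, not the classification. Second, the step ``use Mann's bound on our twelve-term sum to restrict the common denominator $N$ of $U,\dots,Z$ to finitely many values'' is false as stated, and the theorem itself shows why: the families in Table~\ref{tab-solutions} contain solutions of arbitrarily large denominator (take $t=1/n$), so no finite bound on $N$ can exist. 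Mann's theorem bounds the order of the roots of unity only within a \emph{minimal} vanishing subsum, normalized so that one of its terms equals $1$; two-term subsums (pairwise cancellations) escape the bound entirely, and it is precisely the partitions containing such two-term blocks that generate the one-parameter families, with the free parameter living in the unbounded part. The search therefore cannot be organized ``for each admissible $N$''; it must be organized by partition type, with linear relations extracted from the two-term blocks and bounded-order constraints imposed only on blocks of three or more terms. This is not cosmetic: structured as you propose, the search would terminate having found only finitely many solutions and would miss the four infinite families outright.
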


\begin{table}
\centering
\begin{tabular}{c|c|c|c|c|c|c|c}
Type & $U$ & $V$ & $W$ & $X$ & $Y$ & $Z$ & Range \\
\hline
\hline
I & 1/6 & $t$ & $1/3-2t$ & $1/3+t$ & $t$ & $1/6-t$ & $0<t<1/6$ \\
\hline
II & 1/6 & $1/2-3t$ & $t$ & $1/6-t$ & $2t$ & $1/6+t$ & $0<t<1/6$ \\
\hline
III & 1/6 & $1/6-2t$ & $2t$ & $1/6-2t$ & $t$ & $1/2+t$ & $0<t<1/12$ \\
\hline
IV & $1/3-4t$ & $t$ & $1/3+t$ & $1/6-2t$ & $3t$ & $1/6+t$ & $0<t<1/12$
\end{tabular}
\caption{The nontrivial infinite families of solutions to~\eqref{eq-heavyIntersections} (taken from~\cite{poRu98}).}
\label{tab-solutions}
\end{table}

\subsection{Finding a 5-Face}

First, the statement of Theorem~\ref{thm-5faceRegular} is trivial for $n \leq 4$ so we assume $n \geq 5$.
Second, it follows from the formula on the number of crossings in $D$ by Poonen and Rubinstein~\cite{poRu98} that if $n$ is odd, then $D$ is generic.
In this case, there is a 5-face in~$D$ by Theorem~\ref{thm-5faceGeneric}.
We can thus assume that $n$ is even.
If $n = 6$, then it is easy to verify that there is no 5-face in the regular drawing of $K_n$.
Thus, from now on, we assume that $n$ is even and $n \geq 8$ and we show that if $n \notin \{8,12\}$, then there is a 5-face in $D$. 

To find a 5-face in $D$, we consider the following configurations that appear in regular drawings of $K_n$ with $n \geq 8$.
Let $v_1,\dots,v_n$ be the vertices of $D$ traced in this order along the boundary of the convex hull in the clockwise direction.
We assume without loss of generality that $D$ is rotated so that the line segment $v_2v_3$ is horizontal.
Let $a$ be an integer such that $0 \leq a \leq n/2 - 4$.
Let $R_a$ be the region bounded by the line segments $v_2v_{5+a}$, $v_2v_{6+a}$, $v_3v_{n-a-1}$, and $v_3v_{n-a}$; see part~(a) of Figure~\ref{fig-5faceRegular}.
Observe that $R_a$ is a convex 4-gon with two vertices lying on a horizontal line.
We will show that if $n \notin \{8,12,18\}$, then there is $a$ such that $R_a$ contains a 5-face in $D$.

\begin {figure}[ht]
  \centering
  \includegraphics[width=\textwidth]{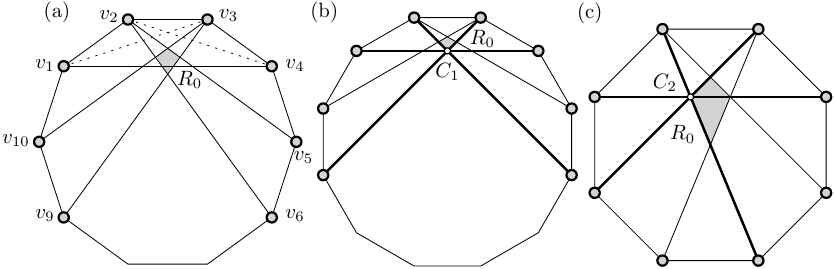}
  \caption {An illustration of the proof of Theorem~\ref{thm-5faceRegular}. (a) Here, $n=10$ and we have $a = 0$. Note that the region $R_a = R_0$ (denoted by light grey) contains a 5-face of~$D$ as $v_1v_4$ does not contain the bottom-most vertex of $R_0$. (b) If $n=12$, then the region $R_0$ does not contain a 5-face of $D$ as the bottom-most vertex of $R_0$ forms a heavy crossing in $D$. (c) If $n=8$, then the region $R_0$ also does not contain a 5-face of~$D$ as $v_1v_4$ contains two vertices of $R_0$ as two middle vertices of $R_0$ form heavy crossings in $D$. A similar situation happens with $R_1$ in the regular drawing of $K_{18}$.}
  \label{fig-5faceRegular}
\end {figure}

Note that the regions $R_a$ with $a \in \{0,1,\dots n/2-4\}$ form a vertical chain where the bottom-most point of $R_i$ is the topmost point of $R_{i+1}$.
Thus, there is an integer $b$ from $\{0,1,\dots n/2-4\}$ such that the line segment $v_1v_4$ intersects $R_b$.
If there are two choices for $b$, then we take the smaller one.
We will show that $R_b$ contains a 5-face of~$D$.

We show that the bottom-most point $r$ of $R_b$ does not lie below the crossing $s$ of the line segments $v_1v_{5}$ and $v_4v_n$; see part~(a) of Figure~\ref{fig-RS}.
First, we assume that $n \geq 12$ as the cases $n=8$ and $n=10$ can be easily checked; see Figures~\ref{fig-K8} and~\ref{fig-5faceRegular}, for example.
Now, let $m$ be the midpoint of $v_1v_4$.
It suffices to show that angle $\angle mv_2s$ is at least $\pi/n$; indeed, if $q$ is the top point of the region $R_b$, then the angle $\angle qv_2r$ is $\pi/n$ and since $q$ is above $m$ (by definition), we would conclude that $r$ is above $s$.
When $n=12$, we in fact have $\angle mv_2s=\pi/n$; see part~(b) of Figure~\ref{fig-RS}.
This is because line $v_2m$ passes through $v_6$ and line $v_2s$ passes through $v_7$ (both is easy to check by the criterion for 3 concurrent chords). 
In particular, the angle $\angle v_1v_2s$ is right, so points $v_1$, $v_2$, $m$, $s$ lie on a single circle.
Now assume $n>12$. 
We claim that in the 4-gon $v_1v_2ms$ both the angle at $v_2$ and the angle at $s$ become larger compared to the case $n=12$. 
In particular, $v_2$ lies strictly inside the circumcircle of triangle $v_1ms$, so the angles satisfy $\angle mv_2s > \angle mv_1s = \angle v_4v_1v_5 = \pi/n$ as desired.
To prove the claim, consider the two angles separately:
First, for the angle at $v_2$, we draw the new $n$-gon with $n>12$ on vertices $v'_1,\dots,v'_n$ such that it shares $v_2$ and $v_3$ with the 12-gon. 
Increasing $n$ makes the angle at $v_2$ larger, so $v_1$ moves clockwise along the circle with center $v_2$ and radius $v_2v_3=v_2v_1$ towards $v_1'$. 
Also, the midpoint $m$ moves up to the midpoint $m'$ of $v_1'v_4'$. Altogether, $\angle v_1'v_2m' > \angle v_1v_2m$.
Second, the angle at $s$ is simply $90^\circ - \angle v_4v_1v_5 = 90^\circ - \pi/n$, so it increases as $n$ increases.
%Consider the line segments $v_1v_5$, $v_4v_n$, and the line segment $v_2s'$ going through $v_2$ and $s$ with both endpoints $v_2$ and $s'$ on the unit circle.
%Note that these three line segments form a heavy crossing $s$ and divide up the circumference of the unit circle into arc lengths $2\pi/n$, $2\pi/n$, $4\pi/n$, $2\pi/n$, $2x\pi/n$, and $2(n-x-5)\pi/n$ for some $x$ with $0<x<n-5$.
%If $r$ lies below $s$, then $x \leq 1$ as otherwise $v_2v_{b+6}$ intersects $v_1v_5$ and $v_4v_n$ to the right of $s$ \sout{and $r$ could not be below $s$.}
%Since $n \geq 8$, we have $s=(0,y)$ for some $y \geq 0$, which implies that $x \geq 1$ as the angle at $s$ in the triangle $v_1v_2s$ is the same as the angle of the triangle $ss'v_5$.
%Then, however, $x=1$, $s=(0,0)$, and we have $n-x-5=2$, which gives $n=8$ and $r=s$.
%Thus, $r$ is indeed not below $s$.

\begin {figure}[ht]
  \centering
  \includegraphics{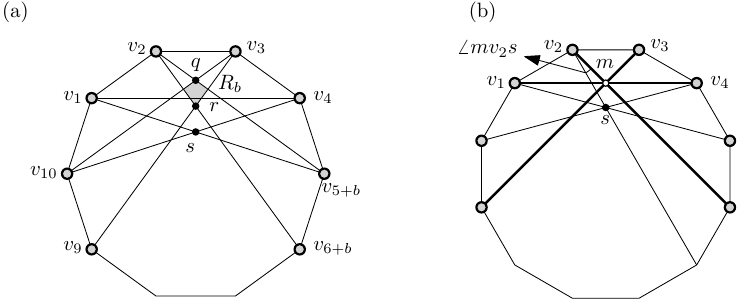}
  \caption {Proving that $r$ does not lie below $s$  for $n \geq 8$.}
  \label{fig-RS}
\end {figure}

Note that for $n > 8$, the interior of the line segment $v_4v_n$ contains $s$ and lies above the center of $D$, which is the bottom-most point of $R_{n/2-4}$.
Since $r$ is not below $s$, we have $b \leq n/2-5$ if $n>8$.

It also follows that the interior of $R_b$ is not intersected by any other edge of $D$ besides $v_1v_4$. 
Thus, if the line segment $v_1v_4$ does not contain any vertex of $R_b$, then we have a 5-face in $D$.
On the other hand, if the line segment $v_1v_4$ contains a vertex of $R_b$, then there would be no 5-face in $R_b$; see parts~(b) and~(c) of Figure~\ref{fig-5faceRegular}.

Therefore, it remains to show that $v_1v_4$ does not contain any vertex of $R_b$.
We apply Theorem~\ref{thm-PoRu} to prove that if $n \notin \{8,12,18\}$, then there is a 5-face of $D$ in $R_b$.
Observe that if the edge $v_1v_4$ contains a vertex of $R_b$ then, by our choice of $b$, it is either the bottom-most vertex of $R_b$ or the two middle vertices of $R_b$ that lie on a common horizontal line.
In each of these two cases, we get a heavy crossing, either the heavy crossing $C_1$ between edges $v_1v_4$, $v_2v_{6+b}$, and $v_3v_{n-b-1}$ or the heavy crossing $C_2$ between $v_1v_4$, $v_2v_{6+b}$, and $v_3v_{n-b}$.
These heavy crossings are denoted by thick black edges in parts~(b) and~(c) of Figure~\ref{fig-5faceRegular}.

\subsection{\texorpdfstring{Excluding $C_1$}{Excluding C1}}

First, we show that the heavy crossing $C_1$ can appear only if $n=12$.
The three edges determining $C_1$ divide up the circumference of the unit circle into arc lengths $u'$, $x'$, $v'$, $y'$, $w'$, and $z'$; see Figure~\ref{fig-C1}.
It follows that $u' = \frac{2\pi(b+2)}{n}$, $v' = \frac{2\pi}{n}$, and $w' = \frac{2\pi(b+2)}{n}$.
Similarly, the arc lengths $x'$, $y'$, and $z'$ are portions of the unit circle between vertices $v_3$ and $v_4$, between $v_1$ and $v_2$, and between $v_{6+b}$ and $v_{n-b-1}$, respectively.
We then obtain $x' = \frac{2\pi}{n}$, $y' = \frac{2\pi}{n}$, and $z' = \frac{2\pi(n-2b-7)}{n}$.
By substituting $U':=u'/(2\pi)$, $V':=v'/(2\pi)$, $W':=w'/(2\pi)$, $X':=x'/(2\pi)$, $Y':=y'/(2\pi)$, and $Z':=z'/(2\pi)$, we get 
\[S=(U',V',W',X',Y',Z') = \left(\frac{b+2}{n}, \frac{1}{n}, \frac{b+2}{n}, \frac{1}{n}, \frac{1}{n}, \frac{n-2b-7}{n}\right)\]
and it suffices to check that this 6-tuple $S$ is a solution of~\eqref{eq-heavyIntersections} if and only if $n=12$.

\begin {figure}[ht]
  \centering
  \includegraphics{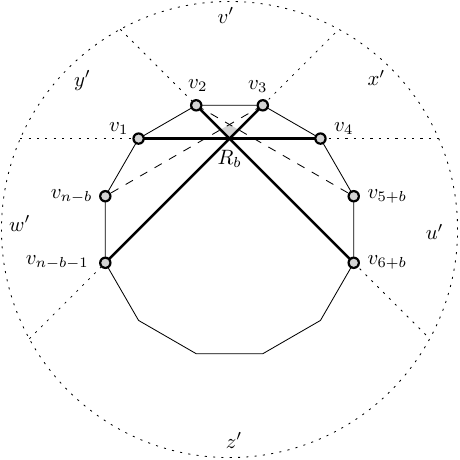}
  \caption {An example of a heavy crossing $C_1$ and its parameters.}
  \label{fig-C1}
\end {figure}

We have $V'=X'=Y'$ and $U' = W'$, so there are at most three distinct values among $U',V',W',X',Y',Z'$.
By checking Table 4 in~\cite{poRu98} for the 65 sporadic solutions of~\eqref{eq-heavyIntersections}, we can see that no sporadic solution satisfies these equalities.

The 6-tuple $S$ does not give a trivial solution of~\eqref{eq-heavyIntersections} as such a solution consists of two identical triples of numbers that sum up to $1/2$, which would imply that $n-2b-7=1$.
This is impossible as then $b = n/2-4$, which contradicts $b \leq n/2-5$. %TODO show this bound
%we know that r is above s, which is above the center for n>8, while the bottommost point of R_{n/2-4} is in the center

Thus, it suffices to check that if $n \neq 12$, then $S$ is not contained in any of the four infinite families of nontrivial solutions of~\eqref{eq-heavyIntersections} that are listed in Table~\ref{tab-solutions}.
We check each of these four families of solutions separately.

Suppose first that $U',V',W',X',Y',Z'$ attain the values $U,V,W,X,Y,Z$ (not necessarily in this order) from a solution of type I from Table~\ref{tab-solutions}.
Since $0<t<1/6$, we have $V=Y,Z<U<X$.
Since $V'=X'=Y'$, we obtain $t = 1/6-t = V=Y=Z=V'=X'=Y'=1/n$, which eventually gives a solution $S=(\frac{2}{12},\frac{1}{12},\frac{2}{12},\frac{1}{12},\frac{1}{12},\frac{5}{12})$ for $n=12$ and $b=0$.
This, however, is impossible since we assume $n \neq 12$.

Suppose now that $U',V',W',X',Y',Z'$ attain the values $U,V,W,X,Y,Z$ (not necessarily in this order) from a solution of type II from Table~\ref{tab-solutions}.
Since $0<t<1/6$, we have $W<U$, $Y<Z$, and $X<U<Z$.
Since $V'=X'=Y' \leq U'=W',Z'$, we have $t=1/6-t=W=X=V'=X'=Y'=1/n$, which gives $1/n=t=1/12$.
This, again, is impossible as $n \neq 12$.

Now, we consider type III.
Let $U',V',W',X',Y',Z'$ attain the values $U,V,W,\allowbreak X,Y,Z$ (not necessarily in this order) from a solution of type III from Table~\ref{tab-solutions}.
since $0<t<1/12$, we have $Y<W<U<Z$.
However, this is impossible as there are at most three distinct values in $U',V',W',X',Y',Z'$ .

Finally, suppose that $U',V',W',X',Y',Z'$ attain the values $U,V,W,X,Y,Z$ (not necessa\-rily in this order) from a solution of type IV from Table~\ref{tab-solutions}.
Since $0<t<1/12$, we have $V<Y<Z$ and $X<U<W$.
We immediately see that we cannot choose the three lowest values $V',X',Y'$ from $U,V,W,X,Y,Z$ to be the same.

Altogether, we see that if $n \neq 12$, there is no crossing $C_1$ in $D$.

\subsection{\texorpdfstring{Excluding $C_2$}{Excluding C2}}

Now, we prove that the heavy crossing $C_2$ can appear only if $n\in\{8,18\}$.
The three edges determining $C_2$ divide up the circumference of the unit circle into arc lengths $u'$, $x'$, $v'$, $y'$, $w'$, and $z'$; see Figure~\ref{fig-C2}.
The arc lengths $u'$, $v'$, and $w'$ are portions of the unit circle between vertices $v_4$ and $v_{6+b}$, between $v_2$ and $v_3$, and between $v_{n-b}$ and $v_1$, respectively.
It follows that $u' = \frac{2\pi(b+2)}{n}$, $v' = \frac{2\pi}{n}$, and $w' = \frac{2\pi(b+1)}{n}$.
Similarly, the arc lengths $x'$, $y'$, and $z'$ are portions of the unit circle between vertices $v_3$ and $v_4$, between $v_1$ and $v_2$, and between $v_{6+b}$ and $v_{n-b}$, respectively.
We then obtain $x' = \frac{2\pi}{n}$, $y' = \frac{2\pi}{n}$, and $z' = \frac{2(n-2b-6)\pi}{n}$.
By substituting $U':=u'/(2\pi)$, $V':=v'/(2\pi)$, $W':=w'/(2\pi)$, $X':=X'/(2\pi)$, $Y':=y'/(2\pi)$, and $Z':=z'/(2\pi)$, we get 
\[S=(U',V',W',X',Y',Z') = \left(\frac{b+2}{n}, \frac{1}{n}, \frac{b+1}{n}, \frac{1}{n}, \frac{1}{n}, \frac{n-2b-6}{n}\right)\]
and it suffices to check that this 6-tuple $S$ is a solution of~\eqref{eq-heavyIntersections} if and only if $n\in\{8,18\}$.

\begin {figure}[ht]
  \centering
  \includegraphics{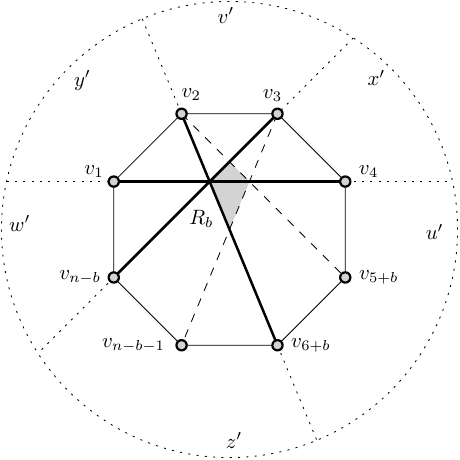}
  \caption {An example of a heavy crossing $C_2$ and its parameters.}
  \label{fig-C2}
\end {figure}

We have $V'=X'=Y'=U'-W' \leq Z'$, so there are at most four distinct values among $U',V',W',X',Y',Z'$.
Similarly as before, by checking Table 4 in~\cite{poRu98} for the 65 sporadic solutions of~\eqref{eq-heavyIntersections}, we note that there are only two sporadic solutions with one value appearing three times: $(\frac{1}{15},\frac{1}{15},\frac{7}{15},\frac{1}{15},\frac{1}{10},\frac{7}{30})$ and $(\frac{1}{30},\frac{1}{30},\frac{7}{10},\frac{1}{30},\frac{1}{15},\frac{2}{15})$.
Neither of these solutions, however, agrees with the values of $S$ for any $b$ and $n$.

Suppose that $S$ yields a trivial solution of~\eqref{eq-heavyIntersections}.
The fact that each trivial solution of~\eqref{eq-heavyIntersections} consists of two identical triples $T_1$ and $T_2$ of numbers that sum up to $1/2$ implies that the value $1/n$ has to appear in one of these triples at least twice, say, in $T_1$.
If it appears three times in $T_1$, then $T_2=(\frac{b+1}{n},\frac{b+2}{n},\frac{n-2b-6}{n})=(\frac{1}{n},\frac{1}{n},\frac{1}{n})$, which is impossible for any $b$ and $n$.
Thus, the value $1/n$ appears twice in $T_1$ and once in $T_2$.
If $T_1=(\frac{1}{n},\frac{1}{n},\frac{b+1}{n})$, then $T_2=(\frac{1}{n},\frac{b+2}{n},\frac{n-2b-6}{n})$ which is impossible as $b+2 \notin \{1,b+1\}$.
If $T_1=(\frac{1}{n},\frac{1}{n},\frac{b+2}{n})$, then $T_2=(\frac{1}{n},\frac{b+1}{n},\frac{n-2b-6}{n})$ and we have $b=0$.
Since the entries of $T_1$ should sum up to $1/2$, this leads to $n=8$, which is impossible, as we assumed $n \neq 8$.
In the last case, $T_1=(\frac{1}{n},\frac{1}{n},\frac{n-2b-6}{n})$ and $T_2=(\frac{1}{n},\frac{b+1}{n},\frac{b+2}{n})$, we again get $b=0$ and $n=8$.

Now, it suffices to check that if $n \notin \{8,18\}$, then $S$ is not contained in any of the four infinite families of nontrivial solutions of~\eqref{eq-heavyIntersections} that are listed in Table~\ref{tab-solutions}.
We again verify this by considering each of the four types I, II, III, and IV separately.

Suppose first that $U',V',W',X',Y',Z'$ attain the values $U,V,W,X,Y,Z$ (not necessarily in this order) from a solution of type I from Table~\ref{tab-solutions}.
Since $0<t<1/6$, we have $Z,V=Y < U < X$.
When combined with the fact $V'=X'=Y'=U'-W' \leq Z'$, we get $1/6-t=t=V=Y=Z=V'=X'=Y'$.
Thus, $t=1/12$ and $n=12$.
This yields $W=1/6$, $U=1/6$, $X=5/12$, which is impossible, as no two of these values differ by $1/12$ as $U'$ and $W'$ should.

Now, suppose that $U',V',W',X',Y',Z'$ attain the values $U,V,W,X,Y,Z$ (not necessarily in this order) from a solution of type II from Table~\ref{tab-solutions}.
Since $0<t<1/6$, we have $W<U,Y<Z$ and $X<U,V$.
From $V'=X'=Y'=U'-W' \leq Z'$, we get $t=1/6-t=W=X=V'=X'=Y'$, which again implies $t=1/12$ and $n=12$.
From this, we derive $U=1/6$, $V=1/4$, $Y=1/6$, and $Z=1/4$.
We see that only two values from $U,V,W,X,Y,Z$ equal $1/12$, not three as it should in $S$.

Suppose that $U',V',W',X',Y',Z'$ attain the values $U,V,W,X,Y,Z$ (not necessarily in this order) from a solution of type III from Table~\ref{tab-solutions}.
Since $0<t<1/12$, we have $Y<W<U<Z$ and $V=X<U$.
Thus, from $V'=X'=Y'=U'-W' \leq Z'$, we get $t=1/6-2t=V=X=Y=V'=X'=Y'$, which implies $t=1/18$ and $n=18$.
From this we obtain a solution $(\frac{3}{18},\frac{1}{18},\frac{2}{18},\frac{1}{18},\frac{1}{18},\frac{10}{18})$ of~\eqref{eq-heavyIntersections}, which corresponds to $S$ for $n=18$ and $b=1$.
However, we assumed $n \neq 18$.

Finally, suppose that $U',V',W',X',Y',Z'$ attain the values $U,V,W,X,Y,Z$ (not necessa\-rily in this order) from a solution of type IV from Table~\ref{tab-solutions}.
Since $0<t<1/12$, we have $V<Y<Z$ and $X<U<W$.
We immediately see that we cannot choose the three lowest values $V',X',Y'$ from $U,V,W,X,Y,Z$ to be the same.

Altogether, we see that if $n \notin \{8,18\}$, there is no crossing $C_2$ in $D$.

\subsection{Remaining Drawings}

By now, we know that there is a 5-face in each regular drawing of $K_n$ with $n \notin \{1,2,3,4,8,12,\allowbreak 18\}$.
It is not difficult to check that regular drawings of $K_8$ and $K_{12}$ do not contain a 5-face; see Figure~\ref{fig-K8}.
Note that there are 8 heavy crossings in the regular drawing of $K_8$ corresponding to the trivial solution $(\frac{1}{8},\frac{2}{8},\frac{1}{8},\frac{1}{8},\frac{1}{8},\frac{2}{8})$ of~\eqref{eq-heavyIntersections}.
In the regular drawing of $K_{12}$, we have 73 heavy crossings, corresponding to the solutions $(\frac{1}{12},\frac{1}{12},\frac{4}{12},\frac{1}{12},\frac{1}{12},\frac{4}{12})$, $(\frac{2}{12},\frac{1}{12},\frac{2}{12},\frac{1}{12},\frac{1}{12},\frac{5}{12})$, $(\frac{2}{12},\frac{1}{12},\frac{3}{12},\frac{1}{12},\frac{2}{12},\frac{3}{12})$ of~\eqref{eq-heavyIntersections} and solutions obtained by permutations of these coordinates.

The situation is different for $K_{18}$ as, although our method does not find a 5-face there, this drawing contains one; see Figure~\ref{fig-K18}.
This is indeed a 5-face as one vertex of this region is formed by a heavy intersection corresponding to the trivial solution $(\frac{3}{18},\frac{2}{18},\frac{4}{18},\frac{2}{8},\frac{3}{18},\frac{4}{18})$ of~\eqref{eq-heavyIntersections}.
This finishes the proof of Theorem~\ref{thm-5faceRegular}.

\begin {figure}[ht]
  \centering
  \includegraphics{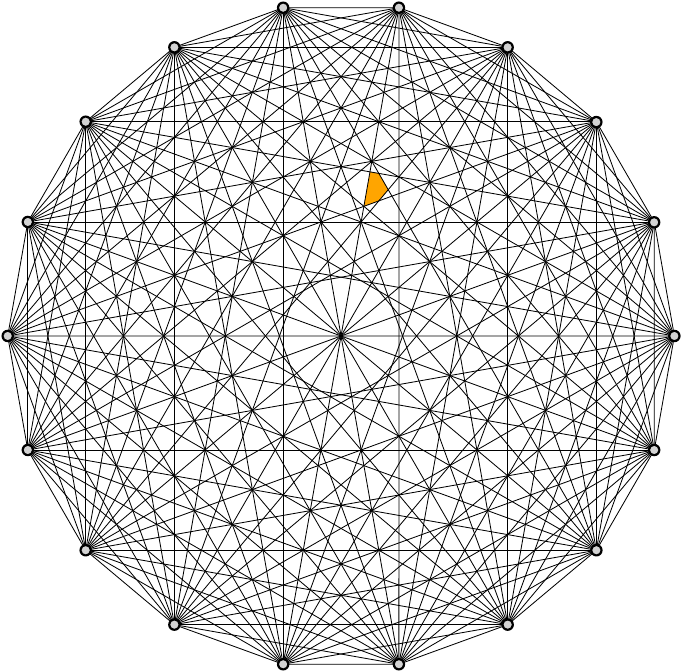}
  \caption {The regular drawing of $K_{18}$ with a distinguished 5-face (orange).}
  \label{fig-K18}
\end {figure}

\section{\texorpdfstring{Finding 5-faces in convex drawings of $K_7$}{Finding 5-faces in convex drawings of K7}}

\label{sec-K7}
%\propKseven*
Here, we show that every convex drawing of $K_7$ contains a 5-face.

Let $D$ be a convex drawing of $K_7$ on vertices $v_1,\dots,v_7$ that appear in this order along the boundary of the convex hull of $D$ when traced in the clockwise direction.

Fix $i \in \{1,\dots,7\}$.
Similarly as in the proof of Theorem~\ref{thm-5faceGeneric}, there is a 5-face $F_i$ in the drawing induced by vertices $\{v_i,\dots,v_{i+4}\}$ (from now on, indices are taken modulo 7) that is not incident to any vertex of $D$.
The face $F_i$ is split by edges $v_{i+2}v_{i+5}$ and $v_{i+2}v_{i+6}$ in $D$; see part~(a) of Figure~\ref{fig-K7}.
Let $p_i$ be the intersection point of $v_iv_{i+3}$ and $v_{i+1}v_{i+4}$.

\begin {figure}[ht]
  \centering
  \includegraphics{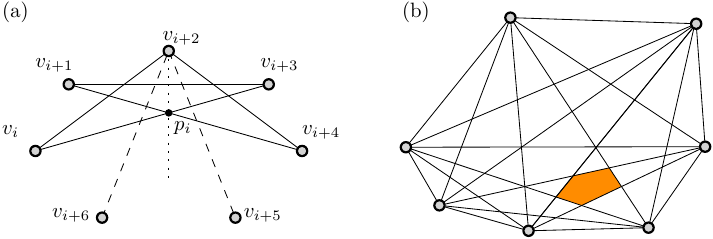}
  \caption {(a) Finding a 5-face in a convex drawing of $K_7$. (b) A convex drawing of $K_7$ with only one 5-face.}
  \label{fig-K7}
\end {figure}

If neither of the edges $v_{i+2}v_{i+5}$ and $v_{i+2}v_{i+6}$ passes through $p_i$, then we have a 5-face in $D$ with the vertex $p_i$.
Suppose otherwise, that is, one of the triples $(v_iv_{i+3},v_{i+1}v_{i+4},v_{i+2}v_{i+5})$ or $(v_{i+2}v_{i+6},v_iv_{i+3},v_{i+1}v_{i+4}) = (v_{i-1}v_{i+2},v_iv_{i+3},v_{i+1}v_{i+4})$ contains three concurrent edges.

Let $T = \{t_i = (v_iv_{i+3},v_{i+1}v_{i+4},v_{i+2}v_{i+5}) \colon i \in \{1,\dots,7\}\}$ be the set of triples of consecutive diagonals in $D$ that skip two vertices.
Observe that the elements $t_1,\dots,t_7$ of $T$ are cyclically ordered.

Suppose for contradiction that $D$ contains no 5-face.
Then, we just derived that either $t_{i-1}$ or $t_i$ are concurrent for every $i \in \{1,\dots,7\}$.
That is, either edges from $t_{i-1}$ or from $t_i$ determine a heavy crossing.
Since 7 is odd, at least two consecutive triples from $T$ determine a heavy crossing.
Since any two consecutive triples from $T$ share two edges, it follows that there are 4 diagonals from $T$ that all cross in a single point.
However, this is impossible, because in every 4-tuple of diagonals of the form $v_jv_{j+3}$ at least two of them share an endpoint as there are only 7 vertices, a contradiction.

Thus, there is at least one 5-face in every convex drawing of $K_7$. 
We remark that there are convex drawings of $K_7$ that contain a unique 5-face (see part~(b) of Figure~\ref{fig-K7}) while the regular drawing of $K_7$ contains seven 5-faces.

\section{Open Problems and Discussion}
\label{sec-openProblems}

The study of extremal questions about faces of a given size in convex drawings of~$K_n$ offers plenty of interesting and natural problems.
Here, we draw attention to some of them.

Although we were able to determine the largest size of a bounded face that appears in every sufficiently large generic convex drawing of $K_n$, the same question remains unsolved for all convex drawings of $K_n$.
In particular, the following problem is open.

\begin{problem}
\label{prob-5FaceConvex}
Is there a positive integer $n_0$ such that for every $n \geq n_0$ every convex drawing of $K_n$ contains a 5-face?
\end{problem}

Since the regular drawing of $K_{12}$ does not contain a 5-face, we have $n_0 \geq 13$, if it exists.
An affirmative answer to Problem~\ref{prob-5FaceConvex} would imply that every sufficiently large \emph{regular} drawing of $K_n$ contains a 5-face, a fact that was quite difficult to prove; see the proof of Theorem~\ref{thm-5faceRegular}.

Considering the regular drawings of $K_n$, although we proved that all sufficiently large regular drawings of $K_n$ contain a 5-face, we do not know much about larger faces.
It seems plausible that we can find arbitrarily large faces in regular drawings of $K_n$ as $n$ grows.

\begin{problem}
\label{prob-largeFaces}
Is it true that for every integer $k \geq 3$ there is an integer $n(k)$ such that every regular drawing of $K_n$ with $n \geq n(k)$ contains a $k$-face?
\end{problem}

For every integer $k$ with $3 \leq k \leq 19$, Shannon and Sloane~\cite{oeis} computed the value $a(k)$, which is the smallest $n$ such that the regular drawing of $K_n$ contains a $k$-face; see Table~\ref{tab-OEIS}.
Note that even if $a(k)$ exists, $n(k)$ might not.
Those computations suggest that the answer to Problem~\ref{prob-largeFaces} might be positive.
In such a case, it would be interesting to determine the growth rate of $n(k)$ with respect to $k$.
It follows from Proposition~\ref{prop-3-4faces} and Theorem~\ref{thm-5faceRegular} that $n(3) = 3$, $n(4)=6$, and $n(5)=13$.
We encourage the reader to visit the website\footnote{\href{https://fklute.com/regularkn.html}{fklute.com/regularkn.html}} to see the regular drawings for themselves.

\begin{table}
\centering
\begin{tabular}{c|c|c|c|c|c|c|c|c}
$k$ & 4 & 6 & 8 & 10 & 12 & 14 & 16 & 18  \\
\hline
\hline
$a(k)$ & 6 & 9 & 13 & 29 & 40 & 43 & 212 & 231 
\end{tabular}
\caption{The values of $a(k)$, the smallest $n$ such that the regular drawing of $K_n$ contains a $k$-face, computed by Shannon and Sloane~\cite{oeis}. We omitted the trivial values $a(k)=k$ for odd values of $k$.}
\label{tab-OEIS}
\end{table}
For $k$ odd, we trivially have $a(k)=k$ as the regular drawing of $K_n$ with $n$ odd contains an $n$-face in the center.
It might be interesting to explore the size of the largest faces in such drawings if we exclude this $n$-face.

A more difficult version of Problem~\ref{prob-largeFaces} would be to determine, for a given $k \geq 3$, all values of $n$ such that every regular drawing of $K_n$ contains a $k$-face.

Another possible direction is to count the minimum number of $k$-faces in a convex drawing of $K_n$.
For example, regarding $3$-faces, it is simple to show that there are always at least $n(n-3)$ by considering the area of a convex drawing around its outer-face as long as $n\geq 3$, but what is the growth rate of the minimum number of 3-faces with respect to $n$?

\begin{problem}
\label{prob-4Faces}
What is the minimum number of 3-faces in a convex drawing of $K_n$? What if the drawing is generic or regular?
\end{problem}

In the whole paper, we focused on convex drawings.
The problems we considered can also be stated for all rectilinear drawings of $K_n$.
Here, we can show that every generic rectilinear drawing of $K_n$ with $n \geq 10$ contains a $k$-face with $k \geq 5$.
This follows easily since, by a result of Harborth~\cite{Harborth1978}, every set $P$ of at least 10 points in the plane without three collinear points contains a \emph{5-hole}, that is, a set $H$ of 5 points in convex position with no point of $P$ in the interior of the convex hull of $H$.
If we then apply this result on the vertex set of a generic rectilinear drawing of $K_n$ and use a similar reasoning as in the proof of Theorem~\ref{thm-5faceGeneric} on the drawing induced by the resulting 5-hole in $D$, then we find a bounded face of size at least 5 in~$D$.

Finally, we considered the problem of finding a bounded face of size exactly $k$ for a given integer $k$, but it also makes sense to consider more relaxed variants of the above problems where we want to find a bounded face of size at least $k$ for a given integer $k$.
In particular, this leads to the following potentially simpler variant of Problem~\ref{prob-5FaceConvex}.

\begin{problem}
\label{prob-AtLeast5FaceConvex}
Is there a positive integer $n_1$ such that for every $n \geq n_1$ every convex drawing of $K_n$ contains a bounded face of size at least 5?
\end{problem}

We note that a simple double-counting argument based on Euler's formula yields the existence of $k$-faces in generic convex drawings of $K_n$ with $k \geq 4$.
If we knew that there are many 3-faces in such drawings, then the argument gives the existence of $k$-faces with $k \geq 5$.
This also illustrates that some insight for Problem~\ref{prob-4Faces} might have consequences for our original questions.

 \paragraph{Acknowledgment}
 We would like to thank the organizers of the 18th European Research Week on Geometric Graphs (GG Week 2023) where this research was initiated.
 We are very grateful to Alexandra Weinberger and Daniel Perz for useful discussions and for their help with finalizing the paper.

\bibliography{bibliography}
\bibliographystyle{plainnat}

\end{document}